\newcommand\blfootnote[1]{%
  \begin{NoHyper}%
  \footnote{#1}%
  \end{NoHyper}%
}
\newtheorem{thm}{Theorem}[section]
\newtheorem{lem}[thm]{Lemma}
\newtheorem{cor}[thm]{Corollary}
\newtheorem{prop}[thm]{Proposition}
\theoremstyle{definition}
\newcommand{\R}{\mathbb{R}}
\newcommand{\N}{\mathbb{N}}
\newcommand{\Unif}{\text{Unif}}
\newenvironment{enumerate*}%
\begin{document}

\begin{frontmatter}
\title{On low frequency inference for diffusions without the hot spots conjecture}

\runtitle{Low frequency inference for diffusions }

\begin{aug}

\author{\fnms{Giovanni S.}~\snm{Alberti}}\blfootnote{Machine Learning Genoa Center (MaLGa), Department of Mathematics, Department of Excellence 2023–2027, University of Genoa, email: giovanni.alberti@unige.it }

\author{\fnms{Douglas}~\snm{Barnes}}\blfootnote{\label{fn:cam}Department of Pure Mathematics and Mathematical Statistics, University of Cambridge, emails: \{db875,aj644\}@cam.ac.uk, nickl@maths.cam.ac.uk}

\author{\fnms{Aditya}~\snm{Jambhale}}$^\dagger$

\author{\fnms{Richard}~\snm{Nickl}}$^\dagger$

\end{aug}

\begin{abstract}
We remove the dependence on the `hot-spots' conjecture in two of the main theorems of the recent paper of Nickl \cite{N}. Specifically, we characterise the minimax convergence rates for estimation of the transition operator $P_{f}$ arising from the Neumann Laplacian with diffusion coefficient $f$ on arbitrary convex domains with smooth boundary, and further show that a general Lipschitz stability estimate holds for the inverse map $P_f\mapsto f$ from $H^2\to H^2$ to $L^1$.
\end{abstract}



\end{frontmatter}

\setcounter{tocdepth}{2}

\section{Introduction}

We revisit here two results of the recent paper of Nickl \cite{N} and begin by recalling its setting. The density of a diffusing substance in an insulated medium, say a bounded convex domain $\mathcal O$ of $\R^d$ ($d \ge 1$) with smooth boundary $\partial \mathcal O$, is described by the solutions $u$ to the parabolic partial differential equation (PDE) known as the heat equation, $\partial u/ \partial t= \mathcal L_{f}u$. The divergence form elliptic second order differential operator $$\mathcal L_{f} = \nabla \cdot (f \nabla)$$ is equipped with Neumann conditions $\frac{\partial u}{\partial \nu} =0$ at $\partial \mathcal O$, where $\nu(x)$ is the (inward pointing) normal vector at $x \in \partial\mathcal O$. Here, $\nabla$ and $\nabla\cdot$ are the usual gradient and divergence operators, and $f:\mathcal O \to [f_{\rm min},\infty),$ $ f_{\rm min}>0,$ is a positive, scalar `diffusivity' function.

As is well known, solutions to this heat equation describe the probability densities of diffusing particles $(X_t)$ solving the stochastic differential equation (SDE)
\begin{equation} \label{eq:SDE}
dX_t = \nabla f(X_t) dt + \sqrt{2f (X_t)} dW_t + \nu(X_t)dL_t, ~~t \ge 0,
\end{equation}
started at  $X_0 = x \in \mathcal O$, where $W_t$ denotes $d-$dimensional Brownian motion. The process is \textit{reflected} when hitting the boundary $\partial \mathcal O$ of its state space: $L_t$ is a `local time' process acting only when $X_t \in \partial \mathcal O$. In the statistical setting considered in \cite{N}, we are given equally spaced observations from (\ref{eq:SDE}): $X_0, X_D, \dots X_{ND}$, where $D>0$ is a fixed observation `time' distance -- one sometimes \cite{GHR04} speaks of low frequency measurements since we do not let $D \to 0$ in the asymptotic framework. By allowing a ``burn in'' phase (i.e. waiting a fixed amount of time), one can assume that $X_0 \sim \Unif(\mathcal O)$, the uniform distribution (invariant measure).

\smallskip

 The non-linear map $f\mapsto P_{D,f}$, where 
 \begin{equation}\label{eq:tran}
     P_{t, f} = e^{t \mathcal L_f}, ~t \ge 0,
 \end{equation}
 is the transition operator of the Markov process $(X_t)$ (see \textsection \ref{sec:proofs}), is injective for any $D$ under mild assumptions on $f$ (Theorem 1 in \cite{N}). Given the data $X_0, X_D, \dots X_{ND}$, we seek to recover the diffusivity $f$. This is possible by first recovering $P_{D, f}$ via an explicit estimator, and then by using inverse continuity estimates for the map $f \mapsto P_{D, f}$, following the general paradigm of Bayesian non-linear inverse problems \cite{N23, MNP21} and earlier ideas from \cite{GHR04} in the one-dimensional case. See \cite{N} for the general setting as well as theoretical results and  \cite{GW24} where concrete MCMC algorithms for such low frequency diffusion data are studied. 
 
 Some of the results proved in  \cite{N} rely on arguments that are closely related to a conjecture in spectral geometry that is known as the \emph{hot-spots conjecture}. Loosely speaking the conjecture is that the first non-constant eigenfunction of the Laplacian $\Delta = \mathcal L_1$ with Neumann conditions on a domain $\mathcal O$ attains its critical points on the boundary $\partial O$, see \cite{B06} for a review of some main ideas. The conjecture is still actively studied and believed to be true for convex planar domains, while it is known to be potentially false \cite{HotspotsFalse} in non-convex domains. A more complete list of recent references can be found in \cite{N}, where the validity of the hot spots conjecture is employed in various places in the proofs (including the construction of basic cylindrical domains where it indeed holds). While this conceivably gives the `right' proofs of these theorems, at the time of this writing, the hot spots conjecture is still unproved in general and we wish to show here how it can be circumvented in the derivation of the results in \cite{N}.

\section{Main results}

Throughout, the $L^p(\mathcal O)$ spaces with $p \in [1, \infty)$ refer to the space of $p$-th power integrable functions on $\mathcal{O}$ with respect to the Lebesgue measure, while $C^k(\mathcal O)$ and $H^k(\mathcal O)$ refer to the space of $k$-times differentiable (resp. weakly differentiable) functions on $\mathcal O$ whose derivatives define uniformly continuous functions (resp. $L^2(\mathcal O)$ functions). The definition of $H^k(\mathcal{O})$ extends to real valued $k$, see \cite{gilbarg}. When the domain is clear, we omit it from the notation. Given any Hilbert space $H$, we denote the operator norm of linear maps from $H$ to $H$ by $\|\cdot\|_{H\to H}$.

\smallskip

To construct a lower bound (in the minimax sense) for the optimal convergence rate of estimation of $P_{D, f}$, a specific domain is constructed in \cite{N} in which the hot spots conjecture can be verified (see Proposition 1 and Theorem 8 in \cite{N}). Moreover, the first eigenvalue of the Laplacian on this domain is simple. A first contribution of the present paper is to remove these restrictions and to obtain a matching lower bound to Theorem 3 in \cite{N} for arbitrary bounded convex domains with smooth boundary:

\begin{thm}\label{thm:optimalrate}
    Let $s > \max(2d-1, 2+d/2)$ and $U>0$. Consider data $X_0, X_D, \dots, X_{ND}$ at a fixed observation distance $D > 0$, from the reflected diffusion model (\ref{eq:SDE}) on a bounded convex domain $\mathcal{O}\subset \mathbb{R}^d$ with smooth boundary, started at $X_0 \sim \mathrm{Unif}(\mathcal{O})$. Then, there exists a constant $c = c(s, D, U, \mathcal{O}, f_{\rm min}) > 0$ such that
    \begin{equation*}
        \liminf_{N\to \infty} \inf_{\Tilde{P}_N} \sup_{f:\|f\|_{H^s(\mathcal O)}\leq U, f\geq f_{\rm min}>0} \mathbb P_f\left(\| \Tilde{P}_N - P_{D, f}\|_{H^2 \to H^2} > cN^{-\frac{s-1}{2s+2+d}} \right) > \frac{1}{4}, 
    \end{equation*}
    where the infimum extends over all estimators $\Tilde{P}_N$ of $P_{D, f}$ (i.e., measurable functions of the $X_0, X_D, \dots, X_{ND}$ taking values in the space of bounded linear operators on $H^2$), and where $\mathbb P_f$ denotes the probability law  induced by the solution $(X_t)$ to (\ref{eq:SDE}) with diffusivity $f$ and observation distance $D>0.$
\end{thm}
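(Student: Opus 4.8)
The plan is to prove the bound by Le Cam's two–point method, perturbing a constant diffusivity by a bump supported \emph{deep in the interior} of $\mathcal{O}$; it is this interior localisation that lets the construction run on an arbitrary convex domain with smooth boundary, replacing the special (cylindrical) domain on which the hot–spots conjecture could be verified in \cite{N}. For the reduction itself, note that $\mathcal{L}_f=\nabla\cdot(f\nabla\,\cdot\,)$ with Neumann conditions is self–adjoint on $L^2(\mathcal{O},dx)$ and kills constants, so Lebesgue measure is invariant for $(X_t)$ for \emph{every} admissible $f$; hence under each $\mathbb P_f$ the sample $X_0,X_D,\dots,X_{ND}$ is a stationary Markov chain with the same initial law, and the chain rule for relative entropy gives
\[
\mathrm{KL}\!\left(\mathbb P_{f_0}^{(N)}\,\big\|\,\mathbb P_{f_1}^{(N)}\right)=N\int_{\mathcal{O}}\mathrm{KL}\!\left(p_{D,f_0}(x,\cdot)\,\big\|\,p_{D,f_1}(x,\cdot)\right)\frac{dx}{|\mathcal{O}|},
\]
$p_{D,f}$ denoting the kernel of $P_{D,f}$. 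It therefore suffices to build, for every large $N$, diffusivities $f_0,f_1$ with $\|f_i\|_{H^s}\le U$, $f_i\ge f_{\rm min}$, such that (i) $\|P_{D,f_1}-P_{D,f_0}\|_{H^2\to H^2}\ge 2c\,N^{-(s-1)/(2s+2+d)}$ and (ii) the relative entropy above is below a fixed constant $<1/2$; Pinsker's inequality and the usual two–point testing bound then give the assertion with probability exceeding $1/4$.

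For the construction, fix a constant background $f_0\equiv c_0$ with $f_{\rm min}<c_0$ and $c_0|\mathcal{O}|^{1/2}<U$ (there is nothing to prove if no such $c_0$ exists, since $\|f\|_{H^s}\ge\|f\|_{L^2}\ge f_{\rm min}|\mathcal{O}|^{1/2}$ always). Fix once and for all a non–constant Neumann eigenfunction $e_\star$ of the Laplacian on $\mathcal{O}$ and an interior point $x_0$ with $\nabla e_\star(x_0)\ne 0$: such $x_0$ exists on \emph{any} domain, precisely because $e_\star$ is non–constant, so that $\{\nabla e_\star\ne 0\}$ is a non–empty open subset of $\mathcal{O}$. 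Pick $r_0>0$ with $\overline{B(x_0,r_0)}\subset\mathcal{O}$ and set $f_1=c_0+\psi_h$, $\psi_h(x)=\delta_h\,\Phi\!\big((x-x_0)/h\big)$, for a fixed $\Phi\in C_c^\infty(B(0,1))$, small $h\in(0,r_0)$, and $\delta_h\asymp h^{\,s-d/2}$ chosen small enough that $\|\psi_h\|_{H^s}<U-c_0|\mathcal{O}|^{1/2}$ (so $f_1$ is admissible) and $\|\psi_h\|_\infty<c_0-f_{\rm min}$ (so $f_1\ge f_{\rm min}$); both hold for $h$ small, since $\delta_h\to0$. The whole point of taking $f_0$ constant and $\psi_h$ supported in a fixed interior ball is that every estimate below involves only the heat kernel of the flat semigroup $e^{tc_0\Delta}$ near $x_0$ together with interior parabolic regularity, and is thus insensitive to the global geometry of $\mathcal{O}$; this is exactly what removes the reliance on a specially constructed domain, and hence on the hot–spots conjecture, present in \cite{N}.

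By Duhamel, $P_{D,f_1}-P_{D,f_0}=T_{\psi_h}+R_h$, with $T_\psi=\int_0^D e^{(D-t)\mathcal{L}_{c_0}}\mathcal{L}_\psi e^{t\mathcal{L}_{c_0}}\,dt$ the Fréchet derivative at $c_0$ ($\mathcal{L}_\psi u=\nabla\cdot(\psi\nabla u)$) and $R_h$ a second–order remainder. For (ii) one writes $T_{\psi_h}$ as an integral operator, integrates by parts in the inner variable (licit as $\psi_h$ has compact support), splits the time integral, and uses the smoothing bounds $\|e^{\tau\mathcal{L}_{c_0}}\nabla\cdot\|_{L^2\to L^2}\lesssim\tau^{-1/2}$ and $\|\nabla e^{\tau\mathcal{L}_{c_0}}\|_{L^2\to L^2}\lesssim\tau^{-1/2}$, the localisation of $\psi_h$, the fact that $e^{\tau\mathcal{L}_{c_0}}$ suppresses frequencies $\gg\tau^{-1/2}$ (so that only $t$ within $O(h^2)$ of $D$ carries frequency $\sim h^{-1}$), and a Weyl–law count of the contributing modes, to bound the Hilbert–Schmidt norm $\|P_{D,f_1}-P_{D,f_0}\|_{HS}$ — equivalently the $L^2(\mathcal{O}\times\mathcal{O})$ norm of $p_{D,f_1}-p_{D,f_0}$ — by $\delta_h h^{\gamma_1}$ for a suitable $\gamma_1>0$. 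Since $p_{D,f_1}$ is bounded below uniformly over the class ($D$ fixed, parabolic Harnack), $\mathrm{KL}(p_{D,f_0}(x,\cdot)\|p_{D,f_1}(x,\cdot))\lesssim\|p_{D,f_0}(x,\cdot)-p_{D,f_1}(x,\cdot)\|_{L^2}^2$, so the relative entropy in the display is $\lesssim N\delta_h^2 h^{2\gamma_1}$. For (i) one tests $T_{\psi_h}$ on a fixed $g$ — conveniently a low Neumann eigenfunction $e$ of $\mathcal{L}_{c_0}$, so $e^{t\mathcal{L}_{c_0}}$ acts as a scalar and no time–cancellation occurs — the decisive point being that the piece $t\in[D-h^2,D]$ of the Duhamel integral produces genuine frequency–$h^{-1}$ content in $T_{\psi_h}g$, of amplitude bounded below by $\langle\psi_h\nabla e,\nabla e\rangle\asymp|\nabla e(x_0)|^2\delta_h h^d>0$, which, through the $h^{-2}$ gain that the $H^2$–norm confers on frequency–$h^{-1}$ functions, contributes $\gtrsim\delta_h h^{\gamma_2}$ to $\|T_{\psi_h}g\|_{H^2}$; the remainder obeys $\|R_h\|_{H^2\to H^2}=o(\delta_h h^{\gamma_2})$, which is exactly where the hypotheses $s>2+d/2$ and $s>2d-1$ are used. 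Choosing $h=h_N$ so that $N\delta_{h_N}^2 h_N^{2\gamma_1}\asymp1$ then balances (i) and (ii) and yields the rate $N^{-(s-1)/(2s+2+d)}$.

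The main obstacle is step (i): showing that a highly localised change of $f$ still moves $P_{D,f}$ by a quantitatively non–trivial amount in the \emph{strong} $H^2\to H^2$ norm. Almost all of the effect of $\psi_h$ on the time–$D$ operator is heavily smoothed — hence $H^2$–small — and the $H^2$–significant part comes only from the recent past $t\approx D$, where $e^{(D-t)\mathcal{L}_{c_0}}$ has not yet erased frequencies of order $h^{-1}$. On the domain of \cite{N} the required lower bound was read off from an explicit first eigenfunction (and the hot–spots picture of where its critical points lie); on a general convex domain one must instead control, by hand, the interplay between the spectral projections of $\mathcal{L}_{c_0}$ and the localisation scale $h$ and rule out cancellation in $\int_0^D dt$. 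The geometric input needed is, however, soft: only that \emph{some} low Neumann eigenfunction of the flat Laplacian has non–vanishing gradient at \emph{some} interior point, which is automatic. The price is that the implied constants, and hence $c$, depend on $\mathcal{O}$ through its (unknown but fixed) low eigenfunctions — which the statement allows — and a secondary technical nuisance is the uniform control of the high–frequency tail of the Hilbert–Schmidt norm and of the quadratic remainder $R_h$, which is what fixes the precise thresholds on $s$.
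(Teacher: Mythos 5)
There is a genuine gap, and it is structural rather than technical: a two--point (Le Cam) scheme with a single bump at one scale cannot produce the threshold $N^{-(s-1)/(2s+2+d)}$. Run your own balancing honestly. With $\psi_h(x)=\delta_h\Phi((x-x_0)/h)$ and $\delta_h\asymp h^{s-d/2}$ (forced by $\|\psi_h\|_{H^s}\lesssim 1$), the per--step KL is governed by $\|f_1-f_0\|_{H^{-1}}^2\asymp\delta_h^2h^{d+2}$ (this is the content of Theorem 11 in \cite{N}, and it is of the correct order, not improvable in $h$), so the chain--level KL is $\asymp N h^{2s+2}$; Le Cam requires this to stay bounded, forcing $h\asymp N^{-1/(2s+2)}$. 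The separation you can hope for is at best of order $\|\nabla\psi_h\|_{L^2}\asymp\delta_h h^{d/2-1}=h^{s-1}$ (your ``$\delta_h h^{\gamma_2}$'' with $\gamma_2=d/2-1$; the linearisation bounds in \cite{N} show the $H^2\to H^2$ distance is also \emph{upper} bounded at this order, so no better $\gamma_2$ is available). Hence your construction proves a lower bound only at $N^{-(s-1)/(2s+2)}$, which is strictly \emph{smaller} than the claimed $N^{-(s-1)/(2s+2+d)}$, i.e.\ a strictly weaker statement that does not match the upper bound of Theorem 3 in \cite{N}. Conversely, if you calibrate $h\asymp N^{-1/(2s+2+d)}$ to get the claimed separation, the KL is $\asymp N^{d/(2s+2+d)}\to\infty$ and the two hypotheses become perfectly distinguishable, so the method gives nothing. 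The final sentence of your third paragraph, asserting that balancing ``yields the rate $N^{-(s-1)/(2s+2+d)}$,'' is therefore false for any admissible values of your $\gamma_1,\gamma_2$.

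The missing idea is to use exponentially many hypotheses so that the KL budget is $\log M$ rather than $O(1)$: this is exactly where the extra $+d$ in the exponent comes from. The paper perturbs $f_0\equiv 1$ by a full hypercube of $n_j\asymp 2^{jd}$ wavelets at scale $2^{-j}$ supported in an interior ball where $\partial_1 e_{1,f_0}$ is bounded away from zero, selects $M\asymp 2^{c_12^{jd}}$ well--separated sign patterns (Varshamov--Gilbert), bounds $KL(f_m,f_0)\lesssim N\eta^2 2^{-2j(s+1)}\lesssim \log M$ with $2^j\asymp N^{1/(2s+2+d)}$, and lower bounds the pairwise $H^2\to H^2$ separation by $2^{-j(s-1)}$ via Lemma 3.1, concluding with the Fano--type bound of Theorem B.1. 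Your interior--localisation idea (exploiting that some low Neumann eigenfunction has nonvanishing gradient at an interior point) is indeed close in spirit to how the paper avoids hot spots, but note that even there one more ingredient is needed which your sketch elides: since the first eigenvalue of $\Delta$ on a general convex domain need not be simple, the eigenfunctions of $\mathcal L_{f_m}$ are not stable individually, and the paper has to work with the perturbed combination $E_{f_m}$ of the first $a$ eigenfunctions (Proposition 3.2, Corollary 3.3, Lemmas 3.4--3.5) together with the multi--eigenfunction lower bound of Lemma 3.1, rather than testing against a single fixed eigenfunction as you propose.
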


Inspection of the proof shows that the supremum in the last display can be further restricted to $f$ constant near $\partial \mathcal O$. The main obstruction in the proof compared to that of Theorem 4 in \cite{N} arises when the first non-zero eigenvalue of the Laplacian is not simple, as is generically the case in `nice' domains, such as on the sphere. Since multiplicity of eigenvalues represents symmetries in the domain, one may expect that the convergence rate of estimators of $P_{D, f}$ could be improved by exploiting these symmetries in some way. Theorem \ref{thm:optimalrate} tells us that the effect of such symmetries is not sufficient to accelerate the minimax rates of convergence. However, the constant $c$ is inversely proportional to the multiplicity of the first eigenvalue, so one could expect the constant factor to decrease in the convergence rate given symmetries (but note that the multiplicities of the eigenvalues of $\Delta$ in any fixed domain are necessarily finite). 

While the solution theory for (\ref{eq:SDE}) is more complex for general non-convex domains, in the cases when solutions exist, Theorem \ref{thm:optimalrate} still holds. However, the assumption of a smooth boundary remains crucial, as it is necessary in Proposition 2 of \cite{N}, which is used extensively here and recalled in Appendix~\ref{app:Prop2} for the reader.

\smallskip

Another result obtained in \cite{N} that depends on the hot-spots conjecture was a Lipschitz stability estimate for the inverse map $P_{D,f} \mapsto f$  with respect to the   $H^2(\mathcal{O})\to H^2(\mathcal{O})$ and the $L^2(\mathcal{O})$ norms. We remove this dependence and obtain a Lipschitz estimate as long as one is satisfied to replace the $L^2(\mathcal{O})$ norm by the $L^1(\mathcal{O})$ norm (which for statistical convergence rates in this infinite-dimensional model is only a mild concession).

\begin{thm}\label{thm:stability} Let $\mathcal O$ be a bounded and smooth (not necessarily convex) domain in $\R^d$. Let $f, f_0\in H^s(\mathcal O),s>\max (d, 2+d/2)$, suppose that $f=f_0$ on $\mathcal{O}\backslash \mathcal O_0$ for some compact subset $\mathcal O_0\subset \mathcal{O}$ and that $f,f_0\ge f_{\rm min}> 0$ on $\mathcal O$. For $D>0$ arbitrary but fixed, let $P_{D,f}$ denote the operator defined in \eqref{eq:tran}. We then have the stability estimate
\begin{equation*}
\|f-f_0\|_{L^1(\mathcal{O})}\le \overline{c} \|P_{D,f}-P_{D,f_0}\|_{H^2\rightarrow H^2}
\end{equation*}
where the constant $\overline{c}$ depends on $s$, $U$, $\mathcal O_0$, $\mathcal O$ and $f_{\rm min}$,  and where $U\ge \|f\|_{H^s}+\|f_0\|_{H^s}$.
\end{thm}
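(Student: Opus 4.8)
The plan is to reduce the problem to a quantitative spectral comparison via the eigenfunction expansion of the operators $P_{D,f}$. Writing $(\lambda_j(f), e_j(f))_{j\ge 0}$ for the Neumann eigenpairs of $-\mathcal L_f$ (with $\lambda_0 = 0$, $e_0$ constant), one has $P_{D,f} = \sum_j e^{-D\lambda_j(f)} e_j(f)\otimes e_j(f)$. The first step is to record, from Proposition 2 of \cite{N} (recalled in the appendix), the basic regularity and two-sided bounds: the $\lambda_j(f)$ are bounded above and below in terms of $U, f_{\min}, \mathcal O$, and the $e_j(f)$ lie in $H^s$ with norms controlled by $U$, with analogous statements for $f_0$. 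Since $P_{D,f}$ is self-adjoint, trace-class, and — crucially — since multiplication by a smooth function is bounded $H^2\to H^2$, the operator norm $\|P_{D,f}-P_{D,f_0}\|_{H^2\to H^2}$ dominates (up to constants) the action on any fixed smooth test function; the strategy is to feed in a cleverly chosen test function, or rather a family of them, to extract information about $f - f_0$.

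The heart of the argument is a linearization. Consider the path $f_\tau = f_0 + \tau(f-f_0)$, $\tau\in[0,1]$; since $f = f_0$ off the compact set $\mathcal O_0$, each $f_\tau$ satisfies the same hypotheses with uniform constants. The Fréchet derivative of $f\mapsto P_{D,f}$ at $f_\tau$ in direction $h = f - f_0$ can be computed by Duhamel's formula,
\begin{equation*}
    DP_{D,f_\tau}[h] = \int_0^D e^{(D-t)\mathcal L_{f_\tau}}\,(\nabla\cdot (h\nabla))\, e^{t\mathcal L_{f_\tau}}\, dt,
\end{equation*}
so that $P_{D,f} - P_{D,f_0} = \int_0^1 DP_{D,f_\tau}[h]\,d\tau$. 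The task is then to bound $\|h\|_{L^1}$ by $\sup_\tau \|DP_{D,f_\tau}[h]\|_{H^2\to H^2}$ (plus lower-order remainder terms, which a second-order Taylor bound on the smooth map $\tau\mapsto P_{D,f_\tau}$, again via Proposition 2, shows are of order $\|h\|_{L^2}^2$ and hence absorbable once the estimate is bootstrapped, cf. the interpolation $\|h\|_{L^2}\lesssim \|h\|_{L^1}^\theta \|h\|_{H^s}^{1-\theta}$). To get a lower bound on the derivative, test $DP_{D,f_\tau}[h]$ against $e_1(f_\tau)\otimes e_1(f_\tau)$ (or sum over a whole eigenspace if $\lambda_1$ has multiplicity $>1$, which is exactly the place where the hot-spots conjecture entered in \cite{N} and where Theorem \ref{thm:optimalrate}'s multiplicity-robust technique should be reused): the diagonal matrix entry is, after integrating the Duhamel formula against the eigenfunction,
\begin{equation*}
    \langle DP_{D,f_\tau}[h]\, e_1, e_1\rangle = -D e^{-D\lambda_1}\int_{\mathcal O} h\,|\nabla e_1(f_\tau)|^2\,dx,
\end{equation*}
so that $\|P_{D,f}-P_{D,f_0}\|_{H^2\to H^2} \gtrsim \big|\int_{\mathcal O} h\,|\nabla e_1|^2\big|$ up to the quadratic remainder.

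This single scalar inequality is not enough to control $\|h\|_{L^1}$; the resolution — and the main obstacle — is that one must choose enough test directions. The idea is to use not just $e_1$ but a rich family of eigenfunction products (or, following \cite{N}, solutions to elliptic equations built from $\mathcal L_{f_\tau}$) whose ``energy densities'' $|\nabla e|^2$, or more generally $\nabla u\cdot \nabla v$ for suitable $u,v$, span a set of functions dense enough that $\int h\, g = 0$ for all such $g$ forces $h\equiv 0$. Since $h$ is supported in the fixed compact $\mathcal O_0 \Subset \mathcal O$, one has substantial freedom: one can use interior solutions of $\mathcal L_{f_\tau} u = 0$ with prescribed boundary data, and the products $\nabla u\cdot\nabla v$ of such solutions are known (by the standard Calderón-type density arguments, e.g. via products of harmonic-type exponentials localized to a neighbourhood of $\mathcal O_0$) to be $L^\infty$-dense, hence $L^1$-dual-complete. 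Turning this qualitative completeness into a quantitative Lipschitz bound is where the work lies: one needs a stability constant for the density, which one obtains by a compactness/contradiction argument — if no such $\overline c$ existed, there would be a sequence $h_n$ with $\|h_n\|_{L^1}=1$, $\|h_n\|_{H^s}\le U$, and $\|P_{D,f_n}-P_{D,f_0}\|_{H^2\to H^2}\to 0$; extracting an $L^1$-convergent (in fact $H^{s-\epsilon}$-convergent, by Rellich) subsequence $h_n \to h$ with $\|h\|_{L^1}=1$ and using the continuity of the forward map plus the injectivity from Theorem 1 of \cite{N} (applied to the pair $f_0, f_0 + h$) yields $h = 0$, a contradiction. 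The delicate points are: (i) ensuring the injectivity/density machinery applies uniformly along the whole segment $f_\tau$ and for the limit, (ii) handling the eigenvalue multiplicity robustly as in Theorem \ref{thm:optimalrate} rather than via hot-spots, and (iii) confirming that the quadratic Duhamel remainder is genuinely lower-order in the relevant norms so that the linearized estimate closes — this last step uses the $H^s$ a priori bound and Sobolev interpolation in an essential way.
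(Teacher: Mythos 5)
There are genuine gaps, and they sit exactly where the theorem's difficulty lies. First, the concluding compactness/contradiction step does not deliver a Lipschitz constant. Negating the claimed estimate gives a sequence $f_n$ with $\|f_n-f_0\|_{L^1}> n\,\|P_{D,f_n}-P_{D,f_0}\|_{H^2\to H^2}$; you cannot renormalise to $\|h_n\|_{L^1}=1$ because the map $h\mapsto P_{D,f_0+h}-P_{D,f_0}$ is not homogeneous and the ratio inequality is not scale-invariant. With the correct negation the sequence may simply converge to $f_0$ itself, in which case continuity of the forward map plus the injectivity of Theorem 1 of \cite{N} yields no contradiction: what is needed to close such an argument is precisely a quantitative (linearised, uniform over the class) stability estimate at $f_0$ — i.e.\ the content of the theorem — so the scheme as written is circular. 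Relatedly, your plan to absorb the quadratic Duhamel remainder by interpolation requires smallness of $\|h\|$, but the statement is a global estimate over an $H^s$-ball, not a local one near $f_0$. The paper avoids both issues at once: the ``pseudo-linearisation'' identity it uses (built from eigenfunctions of \emph{both} $f$ and $f_0$) is exact, with no Taylor remainder, and leads directly to the uniform lower bound $\|P_{D,f}-P_{D,f_0}\|_{H^2\to H^2}\gtrsim \max_{1\le k\le\kappa}\|\nabla\cdot[(f-f_0)\nabla e_{k,f_0}]\|_{L^2}$ with explicit constants (Lemma \ref{lem:transitionlowerb}).

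Second, the passage from quantities like $\int_{\mathcal O} h\,|\nabla e_1|^2$ to $\|h\|_{L^1}$ is exactly the crux, and ``Calder\'on-type density of products of solutions'' cannot supply it here: $h$ changes sign, so diagonal entries $\int h|\nabla e_1|^2$ can vanish for large $\|h\|_{L^1}$; the data are spectral (the semigroup at one time $D$), not boundary data; and the constants degenerate exponentially in the number of eigenfunctions used ($\tilde c\sim e^{-c\kappa^{2/d}}$), so only a \emph{fixed finite} family can enter with uniform constants — qualitative density is useless without a quantitative, finite-$\kappa$ statement. The paper's resolution has three concrete ingredients missing from your proposal: (i) the sign problem is handled pointwise, via $|\nabla\cdot(|h|\nabla u)|\le|\nabla\cdot(h\nabla u)|$ a.e.\ (chain rule for $|h|$, Lemma \ref{lem:chainrule}), so that integrating $\nabla\cdot(|h|\nabla e_k)$ against $e_k$ produces $\int_{\mathcal O_0}|h|\,|\nabla e_k|^2$ with no cancellation (Lemma \ref{lem:L1upperb-rev}); (ii) an argument of Alberti shows that for some $\kappa$ and $c>0$ depending only on $\mathcal O,\mathcal O_0,s,U,f_{\rm min}$ one has $\sum_{k\le\kappa}|\nabla e_{k,f_0}|^2\ge c$ on $\overline{\mathcal O_0}$ — this is proved constructively by expanding a cut-off linear function in the eigenbasis and killing the tail using the spectral Sobolev norms and Weyl asymptotics, not by compactness (Lemma \ref{lem:gradnonzero-rev}); (iii) combining these with the exact lower bound of (i) above and $f=f_0$ off $\mathcal O_0$ gives the Lipschitz estimate with an explicit constant. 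Your outline correctly identifies the need to use several eigenfunctions and to quantify a non-degeneracy of their gradients, but without steps (i)–(iii) (or equivalents) the argument does not close.
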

We emphasise that this stability estimate holds also if $\mathcal{O}$ is not convex, but for the interpretation of  $P_{D,f}$ in \eqref{eq:tran} as the transition operator of a continuous time Markov process $(X_t)$, the requirement of convexity is natural \cite{T79}. We further remark that the hypothesis that $f$ needs to be known near the boundary is related to the fact that reflection -- which is not informative about $f$ -- dominates the local dynamics near $\partial \mathcal O$.

Our proof of Theorem \ref{thm:stability} relies on the use of the $L^1$ norm to deal with the sign of a certain transport operator, as well as on an idea of \cite{alberti}. Note that a weaker logarithmic estimate for the $L^2$ norm holds without assuming the hot-spots conjecture, as was already shown in \cite{N}, but the stronger Lipschitz estimate is crucial for statistical applications. Indeed, by combining this stability estimate with the proof of Theorem 10 in \cite{N}, we obtain the following contraction rate for the Bayesian posterior distribution arising from a natural $K$-dimensional Gaussian process model for $f$ described in detail in (17) in \cite{N}.
\begin{cor}\label{cor:statisticalresult}
    Consider the data $X_0, X_D, \dots X_{ND}$ from the reflected diffusion model on a bounded convex domain $\mathcal O \subset \R^d$ with smooth boundary started at $X_0 \sim \mathrm{Unif}(\mathcal O)$. Assume $f_0 \in H^s$, $s > \max(2+d/2, 2d-1),$ satisfies $\inf_{x\in \mathcal O} f_0(x) > 1/4$ and $f_0 \equiv 1/2$ on $\mathcal O \setminus \mathcal O_{0}$, where $\mathcal O_0 \subset \mathcal O$ is compact. Let $\Pi(\cdot | X_0, X_D, \dots X_{ND})$ be the posterior distribution resulting from data $X_0, X_D, \dots, X_{ND}$ in model (\ref{eq:SDE}) and from the prior $\Pi$ for $f$ from (17) in \cite{N} with $K \simeq N^{d/(2s+2+d)}$ and the given $s$. Then, we have as $N \to \infty$, 
    \begin{equation}
        \Pi(f : \|f - f_0 \|_{L^1(\mathcal O)} \geq \eta_N | X_0, X_D \dots X_{ND}) \to^{\mathbb P _{f_0}} 0,
    \end{equation}
    where we can take $\eta_N = O(N^{-(s-1)/(2s+2+d)})$.
\end{cor}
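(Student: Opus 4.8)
\emph{Proof plan.} The plan is to follow the standard programme for posterior contraction in non-linear inverse problems (as in \cite{N23, MNP21}), precisely as in the proof of Theorem~10 of \cite{N}, the \emph{only} substantive change being that the concluding inverse-stability step now invokes Theorem~\ref{thm:stability} in place of the hot-spots-dependent $L^2$ estimate used there.

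First I would import, essentially verbatim, the operator-norm contraction established inside the proof of Theorem~10 of \cite{N}: under the stated hypotheses on $f_0$ and for the Gaussian series prior~(17) with $K \simeq N^{d/(2s+2+d)}$, one has, for some finite $M$,
\[
 \Pi\big(f : \|P_{D,f}-P_{D,f_0}\|_{H^2\to H^2} > M \zeta_N \,\big|\, X_0,\dots,X_{ND}\big) \to^{\P_{f_0}} 0, \qquad \zeta_N \simeq N^{-\frac{s-1}{2s+2+d}}.
\]
This rests on three hot-spots-free ingredients: (i) a prior Kullback--Leibler small-ball bound $\Pi(B_N) \gtrsim e^{-C N \zeta_N^2}$ for the usual KL-neighbourhood $B_N$ of $\P_{f_0}^{(N)}$, following from $f_0\in H^s$, the approximation properties of the $K$-dimensional link-function model, and the information inequalities controlling the relevant statistical distances between $\P_f^{(N)}$ and $\P_{f_0}^{(N)}$ in terms of $\|P_{D,f}-P_{D,f_0}\|_{H^2\to H^2}$; (ii) exponentially powerful tests of $P_{D,f_0}$ against $\{\|P_{D,f}-P_{D,f_0}\|_{H^2\to H^2}>M\zeta_N\}$, constructed from the explicit spectral estimator $\tilde P_N$ of $P_{D,f}$ and its concentration, together with a covering-number bound for the $K$-dimensional parameter set; and (iii) the general posterior contraction theorem.

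Next I would record that the prior~(17) of \cite{N} is, by construction, supported on $f$ with $f\equiv 1/2$ on $\mathcal O\setminus\mathcal O_0$ and $\inf_{\mathcal O}f>1/4$, and that a Gaussian concentration estimate for the $K$-truncated series gives $\Pi(\|f\|_{H^s}\le U\mid X_0,\dots,X_{ND})\to^{\P_{f_0}}1$ for a suitable fixed $U$ (the $H^s$-norm of such a truncation being $O(1)$ with overwhelming probability, uniformly over the range of $K$ under consideration). Hence, with posterior probability tending to one, a draw $f$ meets every hypothesis of Theorem~\ref{thm:stability} with centre $f_0$, subset $\mathcal O_0$, and $f_{\rm min}=1/4$ (note $s>\max(2+d/2,2d-1)\ge\max(d,2+d/2)$, so Theorem~\ref{thm:stability} indeed applies). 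On the intersection of this event with the one from the previous step, Theorem~\ref{thm:stability} yields
\[
 \|f-f_0\|_{L^1(\mathcal O)} \le \overline c\,\|P_{D,f}-P_{D,f_0}\|_{H^2\to H^2} \le \overline c\, M\,\zeta_N,
\]
with $\overline c=\overline c(s,U,\mathcal O_0,\mathcal O)$; since this intersection has posterior probability $\to^{\P_{f_0}}1$, we obtain the claimed $L^1$-contraction with $\eta_N=\overline c M\zeta_N=O(N^{-(s-1)/(2s+2+d)})$.

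The only genuinely new input here is Theorem~\ref{thm:stability}; the Bayesian machinery of the first two steps is already hot-spots-free in \cite{N}. Accordingly, the real difficulty was discharged in proving Theorem~\ref{thm:stability}, and the main thing left to be careful about is the verification in the third step that the event on which Theorem~\ref{thm:stability} is valid---in particular the uniform $H^s$-bound on posterior draws, on which the constant $\overline c$ depends---indeed carries posterior probability approaching one.
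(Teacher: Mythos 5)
Your proposal is correct and follows essentially the same route as the paper: the paper deduces the corollary by running the proof of Theorem 10 in \cite{N} unchanged (KL small-ball, tests, and operator-norm contraction for $P_{D,f}$, all hot-spots-free) and simply substituting Theorem~\ref{thm:stability} for Theorem 6 of \cite{N} in the set-inclusion step of \textsection 3.6.2.ii there, which is exactly your plan. Your additional care about the event on which posterior draws satisfy the hypotheses of Theorem~\ref{thm:stability} (prior support with $f\equiv 1/2$ near $\partial\mathcal O$, $f>1/4$, and the high-posterior-probability $H^s$ bound) is precisely what that set-inclusion argument in \cite{N} handles.
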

To deduce this corollary, one uses Theorem \ref{thm:stability} (instead of Theorem 6 from \cite{N}) in the set-inclusion in \textsection 3.6.2.ii from \cite{N}. The contraction rate $\eta_N$ carries over to appropriate posterior mean `point' estimates of $f_0$, as in \textsection 3.6.3 in \cite{N}. It shows that `fast' algebraic convergence rates can be obtained under the standard `non-parametric' assumption that $f$ belongs to a Sobolev space only, which was previously an open problem except for the case $d=1$ considered in \cite{GHR04, NS17, N}.


\section{Proofs}\label{sec:proofs}
To proceed without assuming the hot spots conjecture, the proofs of the theorems rely on the information gained by looking at multiple eigenfunctions simultaneously (see Lemma \ref{lem:transitionlowerb}). We recall some notation from \cite{N}: The transition operator of the Markov process $(X_t)$ solving (\ref{eq:SDE}) is given by
\begin{align*}
&P_{t,f}=e^{t\mathcal{L}_f}, \, t\ge 0, \\
&P_{t,f}(\phi)= \sum_{k\in \mathbb{N}} e^{-t\lambda_{k, f}}e_{k, f}\langle e_{k, f},\phi\rangle_{L^2}, \ \phi\in L^2,
\end{align*}
where $(e_{k, f}, -\lambda_{k, f})_{k\in\mathbb{N}}$ are the eigenpairs of $\mathcal{L}_f \equiv \nabla \cdot(f\nabla)$, that is,
$$\mathcal{L}_f e_{k, f} = -\lambda_{k,f} e_{k, f}$$
and the $e_{k, f}$ are orthonormal in $L^2(\mathcal{O})$ and satisfy Neumann boundary conditions. The eigenfunctions are ordered by increasing eigenvalues $0=\lambda_{0, f}<\lambda_{1, f}\le \dots$ with $e_{0, f}$ constant on $\mathcal O$ for all $f$. We omit the subscript $f$ when there is no danger of confusion. The usual Weyl asymptotics hold: 
\begin{equation}\label{eq:weyl}
    \lambda_k \sim k^{2/d},
\end{equation}
e.g.,  p.111 in \cite{Taylor2}, and as after (26) in \cite{N}, the inequalities in (\ref{eq:weyl}) can be taken to be uniform in $0<f_{\rm min}\le f\le \|f\|_\infty\le U$ (for a fixed domain $\mathcal O$). For $\mathcal O \subset \R^d$ a bounded domain, define
$$L_0^2(\mathcal O) = \left\{g \in L^2(\mathcal O) : \int_{\mathcal O} g(x) \, dx = 0 \right\},$$ 

\noindent the $L^2(\mathcal{O})$-functions orthogonal to $e_{0,f}$. For $s\ge 0$, the Hilbert spaces $\bar{H}^s_f$ are defined by 
$$ \bar{H}^s_f(\mathcal{O})=\left\{g\in L_0^2(\mathcal{O}) : \|g\|_{\bar{H}^s_f}<\infty\right\}, $$
where
$$ \|g\|_{\bar{H}^s_f}^2 = \sum_{j\ge 1} \lambda_{j, f}^s \langle g,e_{j, f}\rangle_{L^2}^2.$$ Again, we omit the subscript $f$ when there is no danger of confusion. The relation between these $\bar H^s_f$ spaces and the usual Sobolev spaces $H^s$ is recalled in Appendix \ref{app:Prop2}.

\subsection{Lower bounds for $\|P_{D, f} - P_{D, f_{0}}\|_{H^2 \to H^2}$ via eigenfunctions}\label{sec:importantlemma}

Let $\mathcal{O}\subset \mathbb{R}^d$ be a bounded convex domain with smooth boundary and let $f,f_0\in H^s(\mathcal O) \subset C^2(\mathcal{O}), s>\max(d,2+d/2),$ be bounded below by $f_{\rm min}>0$ on $\mathcal{O}$. We also assume  that $\|f\|_{H^s}+\|f_0\|_{H^s}\le U$. From a perturbation analysis of the coefficients of the underlying heat equation one can derive the following `pseudo-linearisation identity': Specifically using (42) in \cite{N} with `eigenblock' $E_{k,f_0,\iota}$ there selecting (via choice of $\iota$) a fixed eigenfunction $e_{k,f_0}$ of $\mathcal{L}_{f_0}$, we have
\vspace{-0.2cm}
$$P_{t,f}(e_{k,f_0})-P_{t,f_0}(e_{k,f_0}) = \sum_{l=1}^\infty b_{l,k}\langle e_{l,f},G_k\rangle_{L^2}e_{l,f},$$
\vspace{-0.1cm}
where 
\vspace{-0.3cm}
\begin{align*}
G_k&=\nabla\cdot[(f-f_0)\nabla e_{k,f_0}], \\
b_{l,k}&=\int_0^t e^{-s\lambda_{k,f_0}}e^{-(t-s)\lambda_{l, f}}ds.
\end{align*}
We can then repeat the arguments leading to (55) in \cite{N} for this selected eigenfunction, assuming the frequency $1\le k\le\kappa$ does not exceed a fixed integer $\kappa$ (which will be chosen later): From Corollary 1 in \cite{N} and monotonicity of eigenvalues we know
$$\|e_{k,f_0}\|_{H^2}\lesssim \lambda_{\kappa,f_0}\lesssim \kappa^{2/d}$$
with constants depending on $\mathcal{O},U,$ and $f_{\rm min}$. Setting $t=D$, by definition of the operator norm, the preceding pseudo-linearisation identity, and Appendix \ref{app:Prop2}, we see
\begin{align}
\|P_{D,f}-P_{D,f_0}\|_{H^2\rightarrow H^2}^2 & \gtrsim \|P_{D,f}(e_{k,f_0})-P_{D,f_0}(e_{k,f_0})\|_{\bar{H}_f^2}^2 \nonumber \\
&=\sum_{l=1}^\infty \lambda_{l,f}^2|b_{l,k}|^2|\langle G_k,e_{l,f}\rangle_{L^2}|^2 \label{eq:psuedolin}
\end{align}
with a constant depending only on $\kappa$ and not $k$. We can write
$$b_{l,k}=e^{-D\lambda_{l,f}} \frac{e^{-D(\lambda_{k,f_0}-\lambda_{l,f})}-1}{\lambda_{l,f}-\lambda_{k,f_0}}=D\frac{e^{-D\lambda_{k,f_0}}-e^{-D\lambda_{l,f}}}{D(\lambda_{l,f}-\lambda_{k,f_0})}=De^{\xi(\lambda_{k,f_0},\lambda_{l,f})}$$
for some mean values $\xi(\lambda_{k,f_0},\lambda_{l,f})$ between $-D\lambda_{k,f_0}$ and $-D\lambda_{l,f}$ arising from the mean value theorem applied to the exponential map. This remains true in the degenerate case where $\lambda_{k,f_0}=\lambda_{l,f}$ as then $b_{k,l}=De^{-D\lambda_{k,f_0}}$.

By the Weyl asymptotics (\ref{eq:weyl}), we see that for $k\le\kappa$, $l\le K$ and any $K$ fixed, the last displayed exponential is bounded below by a fixed constant depending on $f_{\rm min}, U, K$ and $\kappa$. The second to last term in the previous display is of order $\lambda_{l,f}^{-1}$ for fixed $D$, $k \le \kappa$, as $l \to \infty$. Hence we have for all $l$ and some $C=C(D,\mathcal{O},f_{\rm min},U,\kappa)$,
$$ |b_{l,k}|\ge C\lambda_{l,f}^{-1}.$$
Combining this estimate with Equation~\eqref{eq:psuedolin} and Parseval's identity gives a preliminary stability estimate:

\begin{lem}\label{lem:transitionlowerb} Suppose $\|f\|_{H^s}+\|f_0\|_{H^s}\le U$ for some $U>0$, $s>\max(d,2+d/2)$ and that $f,f_0\ge f_{\rm min}>0$ in $\mathcal O$. We then have for any $\kappa\in\mathbb{N}$ the inequality
$$\|P_{D,f}-P_{D,f_0}\|_{H^2\rightarrow H^2}\ge \tilde{c} \max_{1\le k\le\kappa} \|\nabla\cdot[(f-f_0)\nabla e_{k,f_0}]\|_{L^2}$$
where $\tilde{c}=\tilde {c}(D,\mathcal{O}, f_{\rm min},s,U,\kappa)$ is a positive constant.
\end{lem}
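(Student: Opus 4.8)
The plan is simply to assemble the ingredients prepared above the statement. Start from the lower bound \eqref{eq:psuedolin}, valid with an implied constant that depends on $\kappa$ but is uniform over $1\le k\le\kappa$, and substitute the pointwise-in-$l$ estimate $|b_{l,k}|\ge C\lambda_{l,f}^{-1}$ (with $C=C(D,\mathcal{O},f_{\rm min},U,\kappa)$) established just before the lemma. This cancels the spectral weight $\lambda_{l,f}^{2}$ in \eqref{eq:psuedolin} and leaves
\[
\|P_{D,f}-P_{D,f_0}\|_{H^2\rightarrow H^2}^{2}\;\gtrsim\;C^{2}\sum_{l\ge 1}\bigl|\langle G_k,e_{l,f}\rangle_{L^2}\bigr|^{2},\qquad G_k=\nabla\cdot[(f-f_0)\nabla e_{k,f_0}].
\]

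Next I would identify the right-hand sum with $C^{2}\|G_k\|_{L^2}^{2}$ via Parseval. Since $f,f_0\in H^s\subset C^2$ and, by elliptic regularity (e.g.\ Corollary 1 in \cite{N}), $e_{k,f_0}$ lies in a sufficiently smooth Sobolev space, $G_k$ is a genuine $L^2$ function; moreover $\int_{\mathcal O}G_k=\int_{\partial\mathcal O}(f-f_0)\,\partial_\nu e_{k,f_0}\,dS=0$ by the divergence theorem together with the Neumann boundary condition on $e_{k,f_0}$, so $G_k\in L_0^2(\mathcal O)$ and $\sum_{l\ge 1}|\langle G_k,e_{l,f}\rangle_{L^2}|^2=\|G_k\|_{L^2}^2$. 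Taking square roots and then maximising over $1\le k\le\kappa$ --- legitimate because both the constant in \eqref{eq:psuedolin} and the constant $C$ are uniform in $k$ over this range --- gives the asserted inequality, with $\tilde c=\tilde c(D,\mathcal{O},f_{\rm min},s,U,\kappa)$; the dependence on $s$ enters only through the regularity bounds used for $e_{k,f_0}$.

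I do not expect a genuine obstacle here: the two substantive facts, namely the pseudo-linearisation identity and the uniform-in-$l$ lower bound $|b_{l,k}|\gtrsim\lambda_{l,f}^{-1}$, are already in hand. The only points needing a little attention --- and the only places a re-derivation would do real work --- are (i) that the lower bound on $|b_{l,k}|$ be uniform over \emph{all} $l$, which is why one treats small $l$ (where $b_{l,k}$ is bounded below by a positive constant, using the uniformity of Weyl's law over $f_{\rm min}\le f\le U$) separately from large $l$ (where $b_{l,k}\sim D\lambda_{l,f}^{-1}$); and (ii) the vanishing of the boundary term, which is immediate from the Neumann condition and, notably, does not require $f=f_0$ near $\partial\mathcal O$.
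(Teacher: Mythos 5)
Your proposal is correct and follows essentially the same route as the paper: it plugs the uniform lower bound $|b_{l,k}|\gtrsim \lambda_{l,f}^{-1}$ into \eqref{eq:psuedolin} and concludes by Parseval, which is exactly how the paper derives Lemma~\ref{lem:transitionlowerb}. Your added check that $G_k\in L^2_0$ (via the divergence theorem and the Neumann condition), so that the sum over $l\ge 1$ really equals $\|G_k\|_{L^2}^2$, is a detail the paper leaves implicit but is entirely in the spirit of its argument.
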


Note that as $\kappa$ grows, $\tilde{c} \sim e^{-c\kappa^{2/d}}$, so it is important to keep $\kappa$ fixed. A result similar to Lemma \ref{lem:transitionlowerb} is obtained in \cite{N} considering only the first eigenfunction, which combined with the hot-spots conjecture was shown to give a lower bound proportional to $\|f-f_0\|_{L^2}$.  The fact that our bound extends to any finite set of eigenfunctions will be essential in the proofs of Theorems \ref{thm:optimalrate} and \ref{thm:stability} to be given.

\subsection{Proof of Theorem \ref{thm:optimalrate}}

For $d =1$, all open bounded convex domains in $\mathbb R$ are intervals of the form $(a, b)$ and the proof of Theorem 4 in \cite{N} is easily seen to apply. We assume $d \geq 2$ for the rest of the section. We start with the following auxiliary result:

\begin{prop}\label{prop:eigenlemma}
    Let $H$ be a separable Hilbert space. Also, let $T$ be a bounded linear self-adjoint operator on $H$ with discrete spectrum $\sigma(T)$, and let $\kappa$ be an eigenvalue  with eigenspace $V,$ $\dim(V) = a$. If $$\sigma(T)\cap [\kappa - \epsilon, \kappa + \epsilon] = \{\kappa \}$$ for some $\epsilon >0$, then there exists $\delta > 0$ such that for any self-adjoint operator $S$ with $\|T-S\|_{H\to H} < \delta$, there is a unique subspace $W$ such that $\dim(W) = a$, and $S|_{W}$ is diagonalizable with $a$ (possibly repeated) eigenvalues $\lambda_{1, S}, \dots, \lambda_{a, S}$ such that $|\lambda_{i, S}-\kappa|<\epsilon$ for all $i$. Moreover, any eigenspace of $S$ with eigenvalue within $\epsilon$ of $\kappa$ is contained in $W$. Finally, the choice of subspace is continuous in the sense that $\|P_W - P_V \|_{H \to H} \to 0$ as $\|T - S\|_{H\to H} \to 0$, where $P_V$ and $P_W$ denote the orthogonal projection onto $V$ and $W$, respectively.
\end{prop}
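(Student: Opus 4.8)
The plan is to run the classical Riesz-projection argument from analytic perturbation theory for isolated parts of the spectrum of a self-adjoint operator. Since $T$ is self-adjoint, $\sigma(T)\subset\mathbb{R}$, so the hypothesis gives $\sigma(T)\cap\{z\in\mathbb{C}:|z-\kappa|\le\epsilon\}=\sigma(T)\cap[\kappa-\epsilon,\kappa+\epsilon]=\{\kappa\}$; hence the circle $\Gamma=\{z:|z-\kappa|=\epsilon\}$ is disjoint from $\sigma(T)$ and its interior meets $\sigma(T)$ only in $\kappa$. As $\Gamma$ is compact and disjoint from the closed set $\sigma(T)$, the resolvent is bounded on it: $M:=\sup_{z\in\Gamma}\|(zI-T)^{-1}\|<\infty$. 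I would then define $P_V=\frac{1}{2\pi i}\oint_\Gamma(zI-T)^{-1}\,dz$, which by the spectral theorem is the orthogonal spectral projection $E_T(\{\kappa\})$, so that $\mathrm{range}(P_V)=V$.

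Next, for self-adjoint $S$ with $\|T-S\|<\delta\le\tfrac1{2M}$, expanding $(zI-S)=(zI-T)[I-(zI-T)^{-1}(S-T)]$ in a Neumann series shows $(zI-S)^{-1}$ exists for every $z\in\Gamma$ with $\sup_{z\in\Gamma}\|(zI-S)^{-1}\|\le 2M$; in particular $\Gamma\cap\sigma(S)=\emptyset$. Set $P_W:=\frac{1}{2\pi i}\oint_\Gamma(zI-S)^{-1}\,dz$ and $W:=\mathrm{range}(P_W)$. Since $S$ is self-adjoint and $\Gamma$ is symmetric about $\mathbb{R}$ and misses $\sigma(S)$, the spectral theorem identifies $P_W=E_S\big((\kappa-\epsilon,\kappa+\epsilon)\big)$, an orthogonal projection onto the $S$-invariant subspace $W$. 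The resolvent identity integrated over $\Gamma$ gives $P_W-P_V=\frac{1}{2\pi i}\oint_\Gamma(zI-S)^{-1}(S-T)(zI-T)^{-1}\,dz$, hence $\|P_W-P_V\|\le 2M^2\epsilon\,\|S-T\|$; this already proves the continuity assertion $\|P_W-P_V\|\to0$ as $\|S-T\|\to0$. Shrinking $\delta$ so that $2M^2\epsilon\,\delta<1$ forces $\|P_W-P_V\|<1$, and two orthogonal projections at distance $<1$ have the same rank, so $\dim W=\dim V=a$. Consequently $W$ is finite-dimensional, $\sigma(S)\cap(\kappa-\epsilon,\kappa+\epsilon)$ is a finite set of eigenvalues of total multiplicity $a$, and the self-adjoint operator $S|_W$ on the finite-dimensional space $W$ is diagonalizable with eigenvalues $\lambda_{1,S},\dots,\lambda_{a,S}$ equal to these spectral points, each obeying $|\lambda_{i,S}-\kappa|<\epsilon$.

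For the containment claim: if $Sw=\lambda w$ with $w\ne0$ and $|\lambda-\kappa|<\epsilon$, then $\lambda$ lies strictly inside $\Gamma$, so $P_Ww=\big(\frac{1}{2\pi i}\oint_\Gamma\frac{dz}{z-\lambda}\big)w=w$, i.e.\ $w\in W$; thus any eigenspace of $S$ with eigenvalue within $\epsilon$ of $\kappa$ sits inside $W$. For uniqueness, if $W'$ is another $a$-dimensional $S$-invariant subspace with $S|_{W'}$ diagonalizable and all its eigenvalues within $\epsilon$ of $\kappa$, then $W'$ has a basis of genuine eigenvectors of $S$ with eigenvalues in $(\kappa-\epsilon,\kappa+\epsilon)$, each lying in $W$ by the previous sentence; so $W'\subseteq W$, and equal dimension gives $W'=W$.

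The proof is essentially bookkeeping around a standard template, so there is no deep obstacle; the points needing the most care are verifying that the Riesz projection $P_W$ of the merely perturbed operator $S$ is a genuine orthogonal projection onto a spectral subspace (which uses self-adjointness of $S$ together with $\Gamma\cap\sigma(S)=\emptyset$), and extracting from $\|P_W-P_V\|<1$ the equality $\dim W=a$ — this is exactly what upgrades $W$ from an a priori possibly infinite-dimensional spectral subspace to the $a$-dimensional diagonalizable block the statement requires.
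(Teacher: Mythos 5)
Your proof is correct. The paper does not argue this from scratch: it simply observes that the proposition is a special case of Theorem 3.16 in Sections IV.3.4--5 of Kato \cite{K}, isolating $\kappa$ by a small circle $\Gamma$ and invoking the decomposition $H=V(S)\oplus M(S)$ for $S$ close to $T$, with self-adjointness supplying real eigenvalues. What you have written is essentially the self-contained version of the machinery behind that citation: the Riesz projections $P_V=\frac{1}{2\pi i}\oint_\Gamma(zI-T)^{-1}\,dz$ and $P_W=\frac{1}{2\pi i}\oint_\Gamma(zI-S)^{-1}\,dz$, the Neumann-series bound showing $\Gamma\cap\sigma(S)=\emptyset$, the resolvent identity giving $\|P_W-P_V\|\le 2M^2\epsilon\|S-T\|$, and the standard fact that orthogonal projections at distance less than $1$ have equal rank. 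Your route buys a bit more than the paper states explicitly: a quantitative (Lipschitz) bound on $\|P_W-P_V\|$ rather than mere continuity, and explicit arguments for the containment and uniqueness claims (via the contour computation $\frac{1}{2\pi i}\oint_\Gamma\frac{dz}{z-\lambda}=1$ for $\lambda$ inside $\Gamma$), which in the paper are left implicit in the appeal to Kato. One cosmetic remark: for the containment claim with an eigenvalue satisfying $|\lambda-\kappa|=\epsilon$, note that such eigenvalues cannot occur once $\|S-T\|<\delta$, since you have already shown $\Gamma\cap\sigma(S)=\emptyset$; so no case is lost by your assuming the strict inequality.
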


\begin{proof}
    Proposition \ref{prop:eigenlemma} is a special case of Theorem 3.16 in Sections IV.3.4-5 in \cite{K}, which simplifies substantially in our setting as we are dealing only with self-adjoint operators on a Hilbert space. In particular, they have real eigenvalues. The condition $$\sigma(T) \cap [\kappa - \epsilon, \kappa + \epsilon] = \{\kappa\}$$ allows us to isolate the value $\kappa$ from the rest of $\sigma(T)$ via a small ball with boundary $\Gamma$ around $\kappa$ in the complex plane. Noting that $\|T - S\|_{H \to H} \to 0$ is equivalent to $\hat \delta (S, T) \to 0$ in the notation of \cite{K}, we can apply Theorem 3.16 from \cite{K} to the decomposition $H = V(T) \oplus M(T)$, where $V(T)$ is the eigenspace of $\kappa$, and where $M(T)$ is some complementary subspace. Thus for $S$ sufficiently close to $T$, we get a corresponding decomposition $$H = V(S) \oplus M(S)$$ such that the spectrum of $S|_{V(S)}$ is contained within $\Gamma$, and also $\dim(V(S)) = \dim(V(T)) = a$. Since $S$ is self-adjoint, we get $a$ (possibly repeated) real eigenvalues in $V(S)$.
\end{proof}

\begin{cor}\label{lem:eigensum}
    In the setting of Proposition \ref{prop:eigenlemma}, given $e\in V$ a normalized eigenvector of $T$, there is a choice of $E \in W$ such that $\|E\| = 1$ and $\|E - e\| \to 0$ as $\|S-T\|_{H \to H} \to 0$.
\end{cor}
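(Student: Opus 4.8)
The plan is to use Proposition~\ref{prop:eigenlemma} as a black box, producing the projections $P_V$ and $P_W$ onto the eigenspace $V$ of $T$ and the perturbed invariant subspace $W$ of $S$, and then simply to set $E = P_W e / \|P_W e\|$ once $\|S-T\|_{H\to H}$ is small enough that $P_W e \neq 0$. The point is that $e \in V$ means $P_V e = e$, so $\|P_W e - e\| = \|P_W e - P_V e\| \le \|P_W - P_V\|_{H\to H}\, \|e\| = \|P_W - P_V\|_{H\to H}$, which tends to $0$ by the last assertion of Proposition~\ref{prop:eigenlemma}. In particular $\|P_W e\| \to \|e\| = 1$, so for $\|S-T\|_{H\to H}$ small the normalization is well-defined.

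The remaining step is the routine estimate that normalizing a vector close to a unit vector does not move it much. Writing $v = P_W e$, we have $v \in W$, $\|v\| \to 1$, and $\|v - e\| \to 0$; then
\begin{equation*}
\left\| \frac{v}{\|v\|} - e \right\| \le \left\| \frac{v}{\|v\|} - v \right\| + \|v - e\| = \bigl| 1 - \|v\| \bigr| + \|v - e\| \le \bigl| \|e\| - \|v\| \bigr| + \|v-e\| \le 2\|v - e\|,
\end{equation*}
using the reverse triangle inequality $\bigl|\|v\|-\|e\|\bigr| \le \|v-e\|$. Since $\|v - e\| \le \|P_W - P_V\|_{H \to H} \to 0$ as $\|S-T\|_{H\to H}\to 0$, we conclude $\|E - e\| \to 0$ with $E = v/\|v\| \in W$ and $\|E\| = 1$, as required.

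There is essentially no obstacle here: the corollary is a one-line consequence of the norm-continuity of the spectral projection already furnished by Proposition~\ref{prop:eigenlemma}, plus the elementary fact above about renormalization. The only thing to be slightly careful about is stating the conclusion as a limit (``as $\|S-T\|_{H\to H}\to 0$'') rather than for a fixed $S$: implicitly one restricts to $S$ in the regime $\|S-T\|_{H\to H} < \delta$ where $W$ is defined and, say, $\|P_W - P_V\|_{H\to H} < 1/2$ so that $\|P_W e\| > 1/2 > 0$ and the normalization makes sense. I would phrase the proof in exactly that order: invoke Proposition~\ref{prop:eigenlemma} to get $W$ and the projection bound, set $E = P_W e/\|P_W e\|$, and then apply the two-line renormalization estimate.
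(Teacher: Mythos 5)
Your proposal is correct and follows essentially the same route as the paper: both take $E = P_W e/\|P_W e\|$, use the convergence $\|P_W - P_V\|_{H\to H}\to 0$ from Proposition~\ref{prop:eigenlemma} to get $P_W e \to e$ and $\|P_W e\| > 1/2$ for $S$ close to $T$, and conclude by renormalization. Your explicit two-line estimate $\|v/\|v\| - e\| \le 2\|v-e\|$ just spells out what the paper leaves implicit.
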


\begin{proof}
    Given $e \in V$, consider $P_We$. Then, since $P_W \to P_V$ as $\|S-T\|_{H\to H} \to 0$, we see that $P_We \to P_V e = e$ as $\|S - T\|_{H\to H} \to 0$. Since $\|e\| = 1$, this means in particular that for $\|P_W-P_V \|_{H\to H}$ small enough, $\|P_We\| > 1/2$, say. Hence, we see that $E \coloneq P_W e/ \|P_We\|$ has the desired properties.
\end{proof}

Now we return to the setting of \textsection \ref{sec:importantlemma} with $f, f_0$ satisfying the hypotheses introduced there. First we note that the perturbation result given in Lemma~5 from \cite{N} still holds for general domains, not just the ``rounded cylinder.''

\begin{lem}\label{lem:convergenceOfInverse}
    Regarding $\mathcal L ^{-1}_f, \mathcal L^{-1}_1$ as bounded linear operators on $L_0^2(\mathcal O)$, we have $\|\mathcal L ^{-1}_f - \mathcal L ^{-1}_1 \|_{L_0^2 \to L_0^2} \lesssim \|f - 1 \|_\infty$, where the constant depends only on $f_{\rm min}, \| f\|_\infty$, and  $\mathcal O$.
\end{lem}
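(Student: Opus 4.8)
The plan is to prove the estimate $\|\mathcal L^{-1}_f - \mathcal L^{-1}_1\|_{L^2_0 \to L^2_0} \lesssim \|f-1\|_\infty$ via the standard resolvent-type identity combined with a uniform a priori bound on the operators $\mathcal L^{-1}_f$ between appropriate spaces. First I would record that $\mathcal L_f : \bar H^2_f \cap L^2_0 \to L^2_0$ is boundedly invertible, with inverse norm controlled uniformly over $f_{\rm min} \le f \le \|f\|_\infty$: this follows from the variational (Lax–Milgram) formulation, since for $u \in H^1 \cap L^2_0$ one has the coercivity estimate $\langle -\mathcal L_f u, u\rangle_{L^2} = \int_{\mathcal O} f |\nabla u|^2 \ge f_{\rm min} \|\nabla u\|_{L^2}^2 \gtrsim f_{\rm min}\|u\|_{L^2}^2$ by the Poincaré inequality on $L^2_0(\mathcal O)$ (valid since $\mathcal O$ is a bounded Lipschitz, indeed smooth, domain), with Poincaré constant depending only on $\mathcal O$. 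Hence $\|\mathcal L^{-1}_f g\|_{H^1} \lesssim \|g\|_{L^2}$ uniformly, and in particular $\|\mathcal L^{-1}_f\|_{L^2_0 \to L^2_0}$ is bounded by a constant depending only on $f_{\rm min}$ and $\mathcal O$.

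Next I would apply the second resolvent identity. For $g \in L^2_0$, set $u = \mathcal L^{-1}_f g$ and $v = \mathcal L^{-1}_1 g$, so that $\mathcal L_f u = g = \mathcal L_1 v = \Delta v$. Then
\begin{equation*}
\mathcal L_f(u - v) = \mathcal L_f u - \mathcal L_f v = \Delta v - \nabla\cdot(f \nabla v) = -\nabla\cdot\big((f-1)\nabla v\big),
\end{equation*}
and since $u - v \in \bar H^2_f \cap L^2_0$ this gives the operator identity $\mathcal L^{-1}_f - \mathcal L^{-1}_1 = -\mathcal L^{-1}_f \circ \nabla\cdot\big((f-1)\nabla \,\cdot\,\big)\circ \mathcal L^{-1}_1$ on $L^2_0$. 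Taking $L^2$ norms and using the uniform bound $\|\mathcal L^{-1}_f h\|_{L^2} \lesssim \|h\|_{H^{-1}}$ (which is just the Lax–Milgram bound stated in dual form, again uniform in $f$) yields
\begin{equation*}
\|u - v\|_{L^2} \lesssim \big\| \nabla\cdot\big((f-1)\nabla v\big)\big\|_{H^{-1}} \lesssim \|(f-1)\nabla v\|_{L^2} \le \|f-1\|_\infty \|\nabla v\|_{L^2} \lesssim \|f-1\|_\infty \|g\|_{L^2},
\end{equation*}
where the last step uses the $H^1$ a priori bound $\|v\|_{H^1} = \|\mathcal L^{-1}_1 g\|_{H^1} \lesssim \|g\|_{L^2}$ with constant depending only on $\mathcal O$. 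Dividing by $\|g\|_{L^2}$ and taking the supremum over $g \in L^2_0$ gives the claim, with constant depending only on $f_{\rm min}$ and $\mathcal O$ (the dependence on $\|f\|_\infty$ is in fact not needed for this one-sided estimate, but is harmless to include).

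The only mildly delicate point — and the step I would be most careful about — is the $H^{-1}$ bound on the divergence-form perturbation: one should check that $\nabla\cdot\big((f-1)\nabla v\big)$, a priori only a distribution, indeed lies in $H^{-1}$ with the stated norm control, and that the Neumann boundary condition is respected so that the identity holds in the space $L^2_0$ (not just modulo boundary terms). This is handled by working throughout with the weak formulation: for any test function $\phi \in H^1 \cap L^2_0$ one has $\langle \nabla\cdot((f-1)\nabla v), \phi\rangle = -\int_{\mathcal O}(f-1)\nabla v \cdot \nabla\phi$, with no boundary contribution precisely because both $u-v$ and the test functions live in the form domain, and then $|\langle \nabla\cdot((f-1)\nabla v),\phi\rangle| \le \|f-1\|_\infty\|\nabla v\|_{L^2}\|\nabla\phi\|_{L^2}$ gives the $H^{-1}$ estimate directly. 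Everything else is routine elliptic theory, and the uniformity of all constants over the class $\{f : f_{\rm min} \le f \le \|f\|_\infty\}$ is immediate since the only structural input is the lower bound $f \ge f_{\rm min}$ in the coercivity estimate.
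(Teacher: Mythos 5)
Your proof is correct and follows essentially the same route as the paper, which simply invokes Lemma~5 of \cite{N} (proved there by exactly this resolvent-identity-plus-Lax--Milgram argument, which carries over verbatim to general smooth domains). The only cosmetic point is that for the Neumann problem the relevant dual norm is that of $(H^1(\mathcal O)\cap L^2_0)^*$ rather than $H^{-1}=(H^1_0)^*$, but since your functional $\phi\mapsto-\int_{\mathcal O}(f-1)\nabla v\cdot\nabla\phi$ is bounded on all of $H^1$ and you work throughout in the weak formulation, the estimate and the uniformity of the constants in $f_{\rm min}$ and $\mathcal O$ stand as written.
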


In our setting, upon applying Corollary \ref{lem:eigensum} to $\mathcal L^{-1}_1$ we obtain $E_f \in L^2_0$ such that $\|E_f - e_{1, 1}\|_{L^2}\to 0$ as $\|f - 1\|_\infty\to 0$, and such that $E_f$ is a linear combination of $a$ eigenfunctions (counted with multiplicity) of $\mathcal L_f$, where $a$ is the degeneracy of the first eigenvalue of $\mathcal L_1$. Using the notation from Proposition \ref{prop:eigenlemma}, the eigenvalues of these eigenfunctions are within $\epsilon$ of $\lambda_{1, 1}^{-1}$. Note that $a$ and $\epsilon$ depend only on $\mathcal O$. Moreover, we observe that the largest eigenvalue of $\mathcal L_f^{-1}$ is $\lambda^{-1}_{1, f}$, and
\begin{align}
        |\lambda_{1, f}^{-1}-\lambda^{-1}_{1,1}|&= \left|\| \mathcal L_f^{-1}\|_{L^2_0 \to L^2_0} - \| \mathcal L_1^{-1}\|_{L^2_0 \to L^2_0}\right| \nonumber\\
        &\leq \|\mathcal L ^{-1}_f - \mathcal L ^{-1}_1 \|< \epsilon, \label{eq:choiceofE}
\end{align}
for $\|f - 1\|_\infty$ sufficiently small. Since the \textit{first} $a$ eigenvalues of $\mathcal L_f$ are the \textit{largest} $a$ eigenvalues of $\mathcal L_f^{-1}$,  we deduce that $E_f$ is a linear combination of the \textit{first} $a$ eigenfunctions of $\mathcal L_f$, say
\begin{equation}\label{eq:eigensumeq}
        E_f = \sum_{i = 1}^a \alpha_{i, f}e_{i, f}, \quad \sum_{i=1}^a |\alpha_{i, f}|^2 = 1.
\end{equation}
Note that in particular we have $|\alpha_{i, f}| \leq 1$ for every $i$.

\begin{lem}\label{lem:approximation_Ee}
  We have  $ \|E_f - e_{1, 1} \|_{C^2(\mathcal O)} \to 0$ as $\|f - 1\|_\infty \to 0$.
\end{lem}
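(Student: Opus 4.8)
The plan is to bootstrap the $L^2$ convergence $\|E_f-e_{1,1}\|_{L^2}\to 0$ supplied by Corollary \ref{lem:eigensum} up to convergence in $C^2(\mathcal O)$ by a compactness argument. First I would show that the family $\{E_f : \|f-1\|_\infty \le \eta\}$ is bounded, uniformly for $\eta$ small, in a space that embeds compactly into $C^2(\mathcal O)$; then any sequence $f_n\to 1$ in $L^\infty$ has $E_{f_n}$ admitting a $C^2$-convergent subsequence, whose limit must be $e_{1,1}$ by the $L^2$ convergence, and a standard subsequence argument promotes this to convergence of the whole family.

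For the uniform bound I would exploit the structure \eqref{eq:eigensumeq}: $E_f=\sum_{i=1}^a\alpha_{i,f}e_{i,f}$ with $\sum_i|\alpha_{i,f}|^2=1$, recalling from \eqref{eq:choiceofE} and the surrounding discussion that the eigenvalues $\lambda_{1,f},\dots,\lambda_{a,f}$ all lie within $\epsilon$ of $\lambda_{1,1}$, hence are bounded by a constant depending only on $\mathcal O$. Consequently $\|E_f\|_{\bar H^m_f}^2=\sum_{i=1}^a\lambda_{i,f}^m|\alpha_{i,f}|^2\le\lambda_{a,f}^m$ is bounded uniformly, and via the equivalence between the $\bar H^m_f$-norm and the $H^m$-norm recalled in Appendix \ref{app:Prop2} (valid on the range of the Sobolev scale permitted by $f\in H^s$) this yields $\|E_f\|_{H^\sigma(\mathcal O)}\le C(\mathcal O,f_{\rm min},U)$ for some $\sigma>2+d/2$. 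Equivalently, one may argue directly by iterated elliptic regularity for the Neumann problems $\nabla\cdot(f\nabla e_{i,f})=-\lambda_{i,f}e_{i,f}$, starting from $\|e_{i,f}\|_{H^2}\lesssim\lambda_{i,f}$ (Corollary 1 in \cite{N}) and using that $f\in H^s$, $s>d/2$, acts as a multiplier on the relevant spaces. Since $s>2+d/2$ we have $\sigma>2+d/2$, so picking $\beta\in(0,\sigma-2-d/2)$ gives $H^\sigma(\mathcal O)\hookrightarrow C^{2,\beta}(\mathcal O)$ and hence $\sup_{\|f-1\|_\infty\le\eta}\|E_f\|_{C^{2,\beta}(\mathcal O)}<\infty$.

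With this in hand I would conclude by contradiction: if the claim failed there would be $\rho>0$ and $f_n$ with $\|f_n-1\|_\infty\to 0$ but $\|E_{f_n}-e_{1,1}\|_{C^2}\ge\rho$. By the uniform $C^{2,\beta}$ bound and the compact embedding $C^{2,\beta}(\mathcal O)\hookrightarrow\hookrightarrow C^2(\mathcal O)$ (Arzel\`a--Ascoli), a subsequence of $E_{f_n}$ converges in $C^2(\mathcal O)$ to some $g$ with $\|g-e_{1,1}\|_{C^2}\ge\rho$; but $C^2$ convergence forces $L^2$ convergence, so $g=e_{1,1}$ by Corollary \ref{lem:eigensum}, a contradiction. (Here $e_{1,1}\in C^2(\mathcal O)$ since $\mathcal O$ has smooth boundary.)

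The main obstacle is the uniformity of the regularity estimate as $f$ ranges over $\{\|f-1\|_\infty\le\eta,\ \|f\|_{H^s}\le U\}$: one must ensure the constants in the elliptic estimate (or in the $\bar H^m_f\simeq H^m$ equivalence) depend on $f$ only through $f_{\rm min}$, $\|f\|_{H^s}$ and $\mathcal O$, and that the eigenvalues $\lambda_{i,f}$ appearing in the bound are uniformly controlled — the latter being precisely what \eqref{eq:choiceofE} delivers. This is routine but is the step where care is needed; the $L^2$-to-$C^2$ upgrade itself is then immediate from compactness.
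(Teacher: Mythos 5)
Your argument is correct, and it rests on the same key ingredient as the paper's proof -- a bound on $E_f$ in a Sobolev space of order above $2+d/2$ that is uniform over the class $\{f\ge f_{\rm min},\ \|f\|_{H^s}\le U\}$, obtained from \eqref{eq:eigensumeq} together with the uniform eigenfunction/eigenvalue estimates (Corollary 1 in \cite{N}, respectively the norm equivalences of Proposition~\ref{prop:a}, whose constants depend on $f$ only through $f_{\rm min}$, $U$, $d$, $\mathcal O$) -- but it finishes by a different mechanism. The paper upgrades the $L^2$ convergence quantitatively: the interpolation inequality \eqref{intpol} between $L^2$ and $H^{s+1}$, combined with the uniform bound $\|E_f\|_{H^{s+1}}\le Ca\cdot a^{(s+1)/d}$, converts the $L^2$ smallness directly into $H^\alpha$ smallness for some $\alpha>2+d/2$, and the Sobolev embedding $H^\alpha\subset C^2$ concludes, with no subsequence extraction and with an explicit modulus of continuity in $\|E_f-e_{1,1}\|_{L^2}$. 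You instead use the uniform bound to place the family in $C^{2,\beta}$ and invoke the compact embedding into $C^2$ together with a subsequence/uniqueness-of-limit argument; this is softer and purely qualitative, but it is a valid proof of the lemma as stated, and the uniformity issues you flag are exactly those settled by Proposition~\ref{prop:a} and the uniform Weyl asymptotics \eqref{eq:weyl}. One small imprecision: \eqref{eq:choiceofE} and Proposition~\ref{prop:eigenlemma} control the eigenvalues of $\mathcal L_f^{-1}$, i.e.\ they place $\lambda_{i,f}^{-1}$ within $\epsilon$ of $\lambda_{1,1}^{-1}$ for $i\le a$, which (for $\epsilon<\lambda_{1,1}^{-1}$) yields the uniform upper bound on $\lambda_{i,f}$ that you need, rather than the statement $|\lambda_{i,f}-\lambda_{1,1}|<\epsilon$ verbatim; alternatively the uniform bound $\lambda_{a,f}\lesssim a^{2/d}$ follows from \eqref{eq:weyl}. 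This does not affect the correctness of your argument.
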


\begin{proof}

    We know from Corollary \ref{lem:eigensum} and Lemma \ref{lem:convergenceOfInverse} that $\| E_f - e_{1, 1} \|_{L^2} \to 0$ as $\|f - 1 \|_\infty \to 0$. By standard interpolation inequalities (see (91) in \cite{N}), we see that for any $s+1 > \alpha>2 + d/2$ there is some $0 < c(s, \alpha) < 1$ s.t.
    \begin{equation} \label{intpol}
        \|E_f - e_{1, 1}\|_{H^\alpha} \leq \|E_f - e_{1, 1} \|_{L^2}^{c(s, \alpha)}\|E_{f} - e_{1, 1}\|_{H^{s+1}}^{1-c(s, \alpha)}.
    \end{equation}
   By Corollary 1 from \cite{N} we obtain a constant $C = C(\mathcal O, d, s, U, f_{\rm min})$ such that $\|e_{j, f}\|_{H^{s+1}} \leq Cj^{(s+1)/d}$. 
    So, by Equation (\ref{eq:eigensumeq}), we observe
    \begin{equation*}
        \|E_f\|_{H^{s+1}} \leq \sum_{j = 1}^a |\alpha_{j, f}|\|e_{j, f}\|_{H^{s+1}} \leq Ca\cdot a^{(s+1)/d}.
    \end{equation*}
    Since $a$ depends only on $\mathcal O$, this means that the second factor in the r.h.s.\ of (\ref{intpol}) is uniformly bounded, and the conclusion of the lemma follows from the Sobolev embedding $H^\alpha \subset C^2$.
\end{proof}
\noindent\emph{Proof of Theorem \ref{thm:optimalrate}.} We follow the argument as in Theorem 10 of \cite{NGW}, but instead of considering solutions for the standard elliptic Dirichlet problem, we consider eigen-problems for the Neumann Laplacian: Let $e_{1, f_0}$ be the first eigenfunction of $\mathcal L_{f_0}$, where $f_0 \equiv 1$ on $\mathcal O$. Since $\lambda_{1, f_0} > 0$, $e_{1, f_0}$ cannot be constant. Thus, $\nabla e_{1, f_0}$ is not zero on all of $\mathcal O$. Without loss of generality, we assume the first component of $\nabla e_{1, f_0}$ is positive at some $x_0$. Since $\nabla e_{1, f_0}$ is continuous, pick some $C, \varepsilon > 0$ and $\delta > 0$ such that $\partial_1 e_{1, f_0} > \varepsilon$ and $|\partial_i e_{1, f_0}| \leq C$ ($i \neq 1$) on a ball $B_\delta (x_0)$ in $\mathcal O$ of radius $\delta$ centred at $x_0$. Next, choose diffusivities $(f_m:m = 1, \dots M)$ as 
$$f_m \coloneqq f_0 + \eta2^{-j(s + d/2)}\sum_{r =1}^{n_j}\beta_{r, m}\Psi_{j, r},~~j \in \N,$$
where $n_j$ are integers such that $n_j \simeq 2^{jd}$ and the $\Psi_{j, r}$, $r = 1, \dots n_j,$ are orthonormal tensor Daubechies wavelets supported in $B_\delta(x_0)$, chosen as in (4.17) in \cite{NGW}, so that in particular $f_m \equiv f_{m'} \equiv 1$ on $\mathcal O \setminus \overline{B_\delta(x_0)}$. Specifically, the $\partial_1$ derivative of $\Psi_{j,r}$ is scaled to be a constant factor $c$ larger than the other partial derivatives. Both $\eta>0$ and $1/c$ will be chosen small enough below. The coefficients $\beta_{r, m}, m = 1, \dots, M,$ of $f_m$ will be sufficiently separated elements of the discrete hypercube, $\beta_m \in \{-1, 1 \}^{n_j}$, for a suitable sequence $j=j_N\to \infty$ detailed below. By standard wavelet characterisations of Sobolev norms (Ch.~4.3, \cite{GN}), these $f_m$'s lie in a $H^s$-ball centered at $f_0$ of radius $C\eta \le U$ for some fixed $C=C(s)>0$ and $\eta$ small enough. 

Choose $j$ large enough so that $\|f_m-f_0 \|_\infty$ is small enough as in (\ref{eq:choiceofE}) and so that 
\begin{equation}\label{eq:eEC2}
    \| E_{f_m} - e_{1, f_0} \|_{C^2(\mathcal{O})} < \varepsilon/2,\qquad m=1,\dots,M,
\end{equation}
by Lemma~\ref{lem:approximation_Ee}. We see that in particular $\partial_1 E_{f_m} > \varepsilon/2$ and $|\partial_i E_{f_m}| \leq C + \varepsilon/2 = C'$ on $B_\delta(x_0)$. From the statement of Lemma~\ref{lem:transitionlowerb}, setting $f$ to be $f_m$, $f_0$ to be $f_{m'}$ for any $m \neq m'$, and using \eqref{eq:eigensumeq} we see that for $c_a>0$ the constant $\tilde c$ with $\kappa =a$
\begin{align*}
    \frac{c_a}{a}\|\nabla\cdot[(f_m-f_{m'})\nabla E_{f_{m'}}]\|_{L^2} & \leq \frac{c_a}{a}\sum_{k = 1}^a|\alpha_{k, f_{m'}}|\|\nabla\cdot[(f_m-f_{m'})\nabla e_{k, f_{m'}}]\|_{L^2} \\
    &\leq c_a \max_{1\leq k \leq a}\|\nabla\cdot[(f_m-f_{m'})\nabla e_{k, f_{m'}}]\|_{L^2} \\
    &\leq \|P_{D,f_m}-P_{D,f_{m'}}\|_{H^2\rightarrow H^2}.
\end{align*}
 Next, we lower bound out
\begin{align*}
    \|\nabla \cdot [(f_m-f_{m'})\nabla E_{f_{m'}}]\|_{L^2} &\geq \underbrace{\|\nabla(f_m-f_{m'})\cdot\nabla E_{f_{m'}}\|_{L^2}}_{I} - \underbrace{\|(f_m-f_{m'})\Delta E_{f_{m'}}\|_{L^2}}_{II}.
\end{align*}
Following the arguments in \cite{NGW},
\begin{align}
    I &= \|\sum_{i = 1}^d \partial_{i}(f_m - f_{m'})\partial_{i}E_{f_{m'}}\|_{L^2}\nonumber\\
    &\geq \|\partial_{1}(f_m - f_{m'})\partial_{1}E_{f_{m'}}\|_{L^2}-\|\sum_{i = 2}^d \partial_{i}(f_m - f_{m'})\partial_{i}E_{f_{m'}}\|_{L^2}\nonumber\\
    &\geq \frac{\varepsilon}{2}\|\partial_{1}(f_m - f_{m'})\|_{L^2}-C'\|\sum_{i = 2}^d \partial_{i}(f_m - f_{m'})\|_{L^2}.
\end{align}
Next, we note that by construction of the $f_m$, for $i \neq 1$,
$$\|\partial_i(f_m - f_{m'})\|_{L^2} = \frac{1}{c}\|\partial_1(f_m - f_{m'})\|_{L^2}$$
Hence, we observe
\begin{align}
    I &\geq \frac{\varepsilon}{2}\|\partial_{1}(f_m - f_{m'})\|_{L^2}-\frac{C'(d-1)}{c}\| \partial_{1}(f_m - f_{m'})\|_{L^2} \nonumber\\
    &\geq \frac{\varepsilon}{4}\|\partial_{1}(f_m - f_{m'})\|_{L^2},
\end{align}
for a sufficiently large choice of $c$. Finally, choosing the $\beta_m$ vectors as in the display after (4.22) in \cite{NGW} we have
\begin{equation} \label{sepM}
    I \gtrsim \|\partial_{1}(f_m-f_{m'}) \|_{L^2} \gtrsim 2^{-j(s-1)},~~\text{and }~M \simeq 2^{c_1 2^{jd}}, ~c_1>0.
\end{equation}
Next we show $II$ is of smaller order than $I$.  We have
\begin{align*}
    II^2 &\leq \|f_m - f_{m'}\|_{L^2}^2\|E_{f_{m'}}\|^2_{C ^2} = \eta^2 2^{-2j(s + d/2)}\sum_{r = 1}^{n_j}|\beta_{r,m} - \beta_{r, m'}|^2\|E_{f_{m'}}\|^2_{C ^2} \\
    &\lesssim \sup_{m'} \|E_{f_{m'}}\|^2_{C ^2} 2^{-2js}
\end{align*}
Therefore, we need only to  show that $\sup_{m'} \|E_{f_{m'}}\|^2_{\mathcal C ^2}$ can be bounded by a constant independent of $j$, which follows from \eqref{eq:eEC2}. In summary, we have shown:
\begin{equation}\label{sepbd}
    \|P_{D,f_m}-P_{D,f_{m'}}\|_{H^2\rightarrow H^2}\gtrsim 2^{-j(s-1)},~~~m \neq m'.
\end{equation}

Next, let $KL(f, f')$ denote the Kullback-Leibler divergence (see Appendix \ref{sec:finisher}) between the distributions of $(X_0, X_D, \dots, X_{ND})$ and $(X_0', X_D', \dots, X_{ND}')$ corresponding to the solutions of (\ref{eq:SDE}) with diffusivities $f$ and $f'$, respectively. Note that $p_{D, f}$ from (33) in \cite{N} can be regarded as the joint probability density of $(X_0, X_D)$ given diffusivity $f$. For each $i = 0, \dots N$, we know $X_{iD} \sim \Unif(\mathcal O)$, and $(X_t: t\geq 0)$ is a Markov process, so the joint distribution of $(X_0, \dots, X_{ND})$ has density which splits into the product $p_{D, f}(X_0, X_D)\cdots p_{D, f}(X_{(N-1)D}, X_{ND})$. Hence,
\begin{align*}
    KL(f_m, f_0) &= \sum_{i = 0}^{N-1} \mathbb{E}_{f_0}\log \frac{p_{D, f_0}(X_{iD}, X_{(i+1)D})}{p_{D, f_m}(X_{iD}, X_{(i+1)D})}.
\end{align*}
Using standard arguments with the wavelet characterization of the $H^{-1}(\mathbb R^d)$-norm (see p. 370 in \cite{GN}, noting that $B_{2, 2}^{-1} = H^{-1}$), and Theorem 11 in \cite{N}, we have
\begin{equation*}
    KL(f_m, f_0) \lesssim N\|f_m - f_0 \|_{H^{-1}}^2 \simeq \eta^2 N2^{-2j(s+1)}.
\end{equation*}
Now, we choose $2^j \simeq N^{\frac{1}{2s + 2 + d}}$, so that $N2^{-2j(s +1)} \simeq 2^{jd} \lesssim \log M$, where we recall that $M$ from (\ref{sepM}) is the number of `hypotheses' $f_m$ lying in a ball of $H^s$ for which the distances from (\ref{sepbd}) are lower bounded by $N^{(-s+1)/(2s+2+d)}$. Choosing $\eta$ small enough we can apply Theorem \ref{thm:632} from Appendix \ref{sec:finisher} with a sufficiently small $\alpha$ to complete the proof.

\subsection{Proof of Theorem \ref{thm:stability}}

\subsubsection{Stability for a transport operator}\label{subsub:stability}

Consider the transport operator $h\rightarrow\nabla\cdot(h\nabla u)$ acting on $H^1(\mathcal{O})$ functions, where $u\in C^2(\mathcal{O})$ is fixed.  In view of Lemma \ref{lem:transitionlowerb}, we have in mind $u=e_{k,f_0}$ for some $k$ and $h=f-f_0$, so $h$ changes sign over $\mathcal{O}$. To deal with this, let us first consider the operator $\nabla\cdot(|h|\nabla u)$.


\begin{lem}\label{lem:chainrule}
 For any vector field $v\in C^1(\mathcal{O},\R^d)$ and $h\in H^1(\mathcal{O})$ we have $$|\nabla\cdot (|h|v)| \le |\nabla\cdot(hv)|$$ almost everywhere on $\mathcal{O}$. In particular, for $u\in C^2(\mathcal{O})$, $$\|\nabla\cdot(|h|\nabla u)\|_{L^2} \le \|\nabla\cdot(h\nabla u)\|_{L^2}.$$
 \begin{proof}
For $x\in \mathbb{R}$, define
\begin{equation*}
    \text{sign}(x) = \begin{cases}
        1 & x>0, \\ 0 & x=0, \\ -1 & x<0.
    \end{cases}
\end{equation*}
By Lemma 7.6 in \cite{gilbarg}, we have $|h|\in H^1(\mathcal{O})$ and the chain rule holds: $\nabla |h|=\text{sign}(h)\nabla h$ almost everywhere. Thus, by the product rule (see e.g.\ (7.18) in \cite{gilbarg}) we have
\begin{align*}
\nabla\cdot(|h|v) &= \text{sign}(h)\nabla h \cdot v + |h|\nabla\cdot v  \\
&= \text{sign}(h)(\nabla h\cdot v + h\nabla\cdot v) \\
&= \text{sign}(h)\nabla \cdot (hv) \\
\intertext{Hence, since $|\text{sign}(h)| \le 1$, we have}
 |\nabla\cdot(|h|v)|&\le |\nabla\cdot(hv)|
\end{align*}
almost everywhere.
The last conclusion of the lemma follows by taking $v=\nabla u$ and integrating the square of both sides in the last inequality.
\end{proof}
\end{lem}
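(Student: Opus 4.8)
The plan is to reduce the $L^2$ bound to the pointwise (almost everywhere) inequality $|\nabla\cdot(|h|v)|\le|\nabla\cdot(hv)|$ on $\mathcal{O}$: once this is in hand, taking $v=\nabla u$ — which is of class $C^1$ since $u\in C^2(\mathcal{O})$ — and integrating the square over $\mathcal{O}$ yields $\|\nabla\cdot(|h|\nabla u)\|_{L^2}\le\|\nabla\cdot(h\nabla u)\|_{L^2}$ immediately. So the entire content is the pointwise estimate.

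For that, the key input is the chain rule for the absolute value in $H^1$: since $h\in H^1(\mathcal{O})$ one has $|h|\in H^1(\mathcal{O})$ with $\nabla|h|=\mathrm{sign}(h)\nabla h$ almost everywhere (Lemma 7.6 in \cite{gilbarg}), where $\mathrm{sign}$ is the signum function with the convention $\mathrm{sign}(0)=0$. I would then expand both divergences using the product rule $\nabla\cdot(gv)=\nabla g\cdot v+g\,\nabla\cdot v$, valid for $g\in H^1(\mathcal{O})$ and $v\in C^1(\mathcal{O},\R^d)$ (e.g.\ (7.18) in \cite{gilbarg}). Applied to $g=|h|$ this gives $\nabla\cdot(|h|v)=\mathrm{sign}(h)\nabla h\cdot v+|h|\,\nabla\cdot v$; using the elementary identity $|h|=\mathrm{sign}(h)\,h$ (valid for every real value, including $h=0$) the right-hand side factors as $\mathrm{sign}(h)\bigl(\nabla h\cdot v+h\,\nabla\cdot v\bigr)=\mathrm{sign}(h)\,\nabla\cdot(hv)$. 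Taking absolute values and using $|\mathrm{sign}(h)|\le1$ then yields $|\nabla\cdot(|h|v)|\le|\nabla\cdot(hv)|$ almost everywhere.

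There is essentially no obstacle: beyond the two cited facts from \cite{gilbarg} the argument is a one-line algebraic manipulation, and the stated $L^2$ inequality follows from it by monotonicity of the integral after the choice $v=\nabla u$. The only slight subtlety worth recording is that the Sobolev chain rule above already encodes $\nabla h=0$ a.e.\ on $\{h=0\}$, so no separate treatment of that set is needed for the factorisation to hold almost everywhere on $\mathcal{O}$.
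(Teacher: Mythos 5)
Your argument is correct and follows essentially the same route as the paper: the chain rule $\nabla|h|=\mathrm{sign}(h)\nabla h$ from Lemma 7.6 in \cite{gilbarg}, the product rule, the factorisation $|h|=\mathrm{sign}(h)h$ to write $\nabla\cdot(|h|v)=\mathrm{sign}(h)\,\nabla\cdot(hv)$, and then $|\mathrm{sign}(h)|\le 1$ followed by integration of the square with $v=\nabla u$. No gaps.
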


We can now prove: 

\begin{lem}\label{lem:L1upperb-rev}
 Suppose $u_1,\dots,u_\kappa\in C^2(\mathcal{O})$ are such that $\|u_k\|_{L^2(\mathcal{O})}=1$ for every $k=1,\dots,\kappa$ and that
$$\sum_{k=1}^\kappa |\nabla u_k(x)|^2 \ge c > 0,\qquad x\in\overline{\mathcal{O}_0}$$ for some  subset $\mathcal O_0$ of $\mathcal O$.
Then for all $h\in H^1(\mathcal{O})$ with boundary trace $h_{|\partial \Omega}=0$ we have
$$\|h\|_{L^1(\mathcal{O}_0)}\le c^{-1} \sum_{k=1}^\kappa\|\nabla\cdot(h\nabla u_k)\|_{L^2(\mathcal{O})}.$$
\end{lem}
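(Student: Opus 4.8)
The plan is to exploit the pointwise lower bound $\sum_{k=1}^\kappa |\nabla u_k(x)|^2 \ge c$ on $\overline{\mathcal O_0}$ together with Lemma~\ref{lem:chainrule} to reduce matters to controlling $|h|$ by the functions $\nabla\cdot(|h|\nabla u_k)$. First I would apply Lemma~\ref{lem:chainrule} with $v = \nabla u_k$ to replace each $\|\nabla\cdot(h\nabla u_k)\|_{L^2(\mathcal O)}$ on the right-hand side by the (smaller) quantity $\|\nabla\cdot(|h|\nabla u_k)\|_{L^2(\mathcal O)}$; so it suffices to prove the bound with $|h|$ in place of $h$ inside the divergence on the right, i.e.\ to show $\|h\|_{L^1(\mathcal O_0)} \le c^{-1}\sum_k \|\nabla\cdot(|h|\nabla u_k)\|_{L^2(\mathcal O)}$. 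Writing $g := |h| \ge 0$, which lies in $H^1(\mathcal O)$ with zero boundary trace (again by Lemma~7.6 in \cite{gilbarg}), the goal becomes a statement purely about the nonnegative function $g$.

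The key identity is the following: expand $\nabla\cdot(g\nabla u_k) = \nabla g\cdot\nabla u_k + g\,\Delta u_k$. The idea of \cite{alberti} is to test cleverly so that the second-order term integrates away. Concretely, I would integrate $\nabla\cdot(g\nabla u_k)$ against $u_k$ over $\mathcal O$: since $g$ vanishes on $\partial\mathcal O$, integration by parts gives $\int_{\mathcal O} u_k\,\nabla\cdot(g\nabla u_k)\,dx = -\int_{\mathcal O} g\,|\nabla u_k|^2\,dx$. Summing over $k=1,\dots,\kappa$ and using the hypothesis on $\overline{\mathcal O_0}$ together with $g\ge 0$,
\[
\int_{\mathcal O_0} g\,c\,dx \le \int_{\mathcal O} g\sum_{k=1}^\kappa |\nabla u_k|^2\,dx = -\sum_{k=1}^\kappa \int_{\mathcal O} u_k\,\nabla\cdot(g\nabla u_k)\,dx \le \sum_{k=1}^\kappa \|u_k\|_{L^2(\mathcal O)}\,\|\nabla\cdot(g\nabla u_k)\|_{L^2(\mathcal O)},
\]
by Cauchy--Schwarz. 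Since $\|u_k\|_{L^2(\mathcal O)} = 1$ and $g = |h|$, dividing by $c$ gives exactly $\|h\|_{L^1(\mathcal O_0)} \le c^{-1}\sum_{k=1}^\kappa \|\nabla\cdot(|h|\nabla u_k)\|_{L^2(\mathcal O)}$, and combining with the first reduction step finishes the proof.

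The main technical point to be careful about is the integration by parts: one needs $g\nabla u_k\cdot\nu = 0$ on $\partial\mathcal O$ so that the boundary term vanishes, and this follows because $g = |h|$ has zero boundary trace (not because of any Neumann condition on $u_k$) — so the argument genuinely needs the hypothesis $h_{|\partial\mathcal O} = 0$ and would fail without it. One should check that $g\nabla u_k \in H^1(\mathcal O,\R^d)$ or at least that the divergence theorem applies in the weak $H^1$ sense; since $g\in H^1$, $u_k\in C^2$, $\mathcal O$ bounded and smooth, the product rule and Gauss--Green formula for $H^1$ functions with vanishing trace (as in \cite{gilbarg}) apply directly, so this is routine. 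The only real subtlety is thus bookkeeping the trace condition and the sign of $g$, both of which are exactly what the $L^1$ formulation and Lemma~\ref{lem:chainrule} are designed to handle.
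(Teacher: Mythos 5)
Your proposal is correct and follows essentially the same route as the paper: testing $\nabla\cdot(|h|\nabla u_k)$ against $u_k$, using the vanishing trace of $|h|$ to kill the boundary term in the integration by parts, Cauchy--Schwarz with $\|u_k\|_{L^2}=1$, the pointwise lower bound on $\sum_k|\nabla u_k|^2$ over $\overline{\mathcal O_0}$, and Lemma~\ref{lem:chainrule} to pass from $|h|$ back to $h$. Your explicit remark that the boundary term vanishes due to the trace condition on $h$ (not any Neumann condition on $u_k$) is a correct and useful clarification of a point the paper leaves implicit.
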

\begin{proof}
By the Cauchy-Schwarz inequality, we have the upper bound
$$\left|\int_\mathcal{O} u_k\nabla\cdot(|h|\nabla u_k)\right|\le \|u_k\|_{L^2(\mathcal{O})}\|\nabla \cdot (|h| \nabla u_k)\|_{L^2(\mathcal{O})}=\|\nabla \cdot (|h| \nabla u_k)\|_{L^2(\mathcal{O})}.$$
By the divergence theorem (\cite{Taylor}, p.140f.), we have
\[
\left|\int_\mathcal{O} u_k\nabla\cdot (|h|\nabla u_k)\right|=\left| \int_{\mathcal{O}} |h|\nabla u_k\cdot\nabla u_k\right| \ge \int_{\mathcal{O}_0} |h\|\nabla u_k|^2.
\]
Combining these two inequalities, and summing over $k$, by Lemma \ref{lem:chainrule} we obtain
$$c\|h\|_{L^1(\mathcal{O}_0)} \le \sum_{k=1}^\kappa \|\nabla\cdot(|h|\nabla u_k)\|_{L^2(\mathcal{O})}
\le \sum_{k=1}^\kappa \|\nabla\cdot(h\nabla u_k)\|_{L^2(\mathcal{O})},$$
as required.
\end{proof}

\subsubsection{Critical points of eigenfunctions}

We now combine the results of the preceding sections and adapt an idea from Alberti \cite{alberti} (see also \cite{GSAcpde}) to obtain the global Lipschitz stability estimate in Theorem~\ref{thm:stability}. We often suppress $f_0$ in the notation, in particular setting $e_k=e_{k,f_0}$ and $\lambda_{k}=\lambda_{k,f_0}$ for all $k$.
\begin{lem}\label{lem:gradnonzero-rev}
For every compact $\mathcal O_0$ subset of $\mathcal O$ there exist $\kappa\in\N$ and $c>0$ depending only on $\mathcal{O}$, $\mathcal{O}_0$, $s$, $U$ and $f_{\rm min}$ such that
    \[
\sum_{k=1}^\kappa |\nabla e_k(x)|^2\ge c,\qquad x\in\overline{\mathcal{O}_0}.
    \]
\end{lem}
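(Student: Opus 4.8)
The plan is to argue that the eigenfunctions $e_{k,f_0}$ collectively have no common critical point in $\overline{\mathcal O_0}$, and then to upgrade this pointwise non-vanishing to a uniform lower bound by compactness. The starting observation is that if, for some $x_0\in\overline{\mathcal O_0}$, we had $\nabla e_k(x_0)=0$ for \emph{all} $k\in\N$, then by density of the span of $\{e_k\}_{k\ge 0}$ in $H^2$ (equivalently, completeness of the eigenbasis together with elliptic regularity giving $H^2$-convergence of spectral expansions for sufficiently smooth functions) every $H^2$ function would have vanishing gradient at $x_0$ — clearly absurd, since one can exhibit a smooth function (satisfying Neumann conditions) whose gradient is nonzero at $x_0$. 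Hence for each fixed $x\in\overline{\mathcal O_0}$ there is some $k$ with $\nabla e_k(x)\ne 0$, so $\sum_{k\ge 1}|\nabla e_k(x)|^2>0$ pointwise. (Note $k=0$ contributes nothing as $e_0$ is constant, so the sum effectively starts at $k=1$.)

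The subtlety is that a priori the index $k$ needed to witness $\nabla e_k(x)\ne 0$ may depend on $x$ in an uncontrolled way, and one needs a \emph{finite} $\kappa$ uniform over $\overline{\mathcal O_0}$. Here I would use continuity and compactness: the partial sums $x\mapsto \sum_{k=1}^{\kappa}|\nabla e_k(x)|^2$ are continuous on $\overline{\mathcal O_0}$ (each $e_k\in C^2$ by Corollary~1 in \cite{N} and Sobolev embedding, using $s>\max(d,2+d/2)$), nonnegative, and increasing in $\kappa$, with pointwise supremum $\sum_{k=1}^\infty|\nabla e_k(x)|^2$ which is everywhere strictly positive by the previous paragraph. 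By Dini's theorem — or more elementarily, a direct covering argument: around each $x\in\overline{\mathcal O_0}$ pick $\kappa_x$ with $\sum_{k=1}^{\kappa_x}|\nabla e_k(x)|^2>0$, use continuity to get a neighbourhood on which this sum exceeds half its value at $x$, extract a finite subcover, and take $\kappa=\max\kappa_x$ and $c$ the minimum over the subcover of the corresponding local lower bounds — one obtains a single $\kappa$ and $c>0$ with $\sum_{k=1}^{\kappa}|\nabla e_k(x)|^2\ge c$ on $\overline{\mathcal O_0}$.

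Finally I would check that $\kappa$ and $c$ can be taken to depend only on $\mathcal O,\mathcal O_0,s,U,f_{\rm min}$ and not on the particular $f_0$ in the admissible class. This is the point where some care is needed. I would argue via a compactness/contradiction scheme over the class $\mathcal F=\{f_0\in H^s:\|f_0\|_{H^s}\le U,\ f_0\ge f_{\rm min}\}$: if no uniform $(\kappa,c)$ existed, there would be a sequence $f_0^{(n)}\in\mathcal F$ and points $x_n\in\overline{\mathcal O_0}$ with $\sum_{k=1}^{n}|\nabla e_{k,f_0^{(n)}}(x_n)|^2\to 0$; by compact embedding $H^s\hookrightarrow C^2$ one passes to a subsequence with $f_0^{(n)}\to f_\star$ in $C^2$ (and $f_\star\in\mathcal F$) and $x_n\to x_\star\in\overline{\mathcal O_0}$, and then stability of the Neumann eigenvalues and eigenfunctions under $C^2$-perturbation of the coefficient (as already used via Lemma~\ref{lem:convergenceOfInverse} and Proposition~\ref{prop:eigenlemma}, at least for the bottom of the spectrum — though here one needs it for each fixed finite block of eigenfunctions, handling possible eigenvalue crossings as in Proposition~\ref{prop:eigenlemma}) forces $\sum_{k=1}^{\kappa}|\nabla e_{k,f_\star}(x_\star)|^2=0$ for every fixed $\kappa$, contradicting the pointwise positivity established for the limit operator $\mathcal L_{f_\star}$. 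The main obstacle I anticipate is precisely this last uniformity step: the eigenfunctions $e_{k,f_0}$ are only well-defined up to sign (and up to a choice within degenerate eigenspaces), so "convergence of eigenfunctions" must be phrased in terms of spectral projections onto eigenvalue-blocks as in Proposition~\ref{prop:eigenlemma}, and one should state the conclusion in terms of the projection-invariant quantity $\sum_{k}|\nabla e_k(x)|^2$ (which, restricted to a block, is $|\nabla|^2$ of the reproducing kernel of that block and hence basis-independent). Everything else is routine continuity and compactness.
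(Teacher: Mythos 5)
Your overall route differs from the paper's, and as written it has two gaps. First, the pointwise step is not quite right for $d\ge 2$: $H^2$-convergence of spectral partial sums does not control gradients at a point, since $H^2\hookrightarrow C^1$ fails once $d\ge 2$. To conclude that $\nabla e_k(x_0)=0$ for all $k$ forces $\nabla\phi(x_0)=0$ for a test function $\phi$, you must take $\phi\in C_c^\infty(\mathcal O)\cap L^2_0$, note that $\phi\in\bar H^l_{f_0}$ for $l\le s+1$ by Proposition~\ref{prop:a}, and use the equivalence $\|\cdot\|_{\bar H^l_{f_0}}\simeq\|\cdot\|_{H^l}$ with $l=\lfloor 2+d/2\rfloor>1+d/2$ together with the Sobolev embedding to get $C^1$-convergence of the spectral tail. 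This is fixable, but it is precisely the machinery one ends up needing quantitatively anyway.

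The more serious gap is the uniformity of $\kappa$ and $c$ in $f_0$, which is the entire point of the lemma, and which your proposal only sketches. Your compactness/contradiction scheme requires: (i) a perturbation bound $\|\mathcal L_f^{-1}-\mathcal L_{f_\star}^{-1}\|_{L^2_0\to L^2_0}\lesssim\|f-f_\star\|_\infty$ around an arbitrary limit point $f_\star$ of the class (Lemma~\ref{lem:convergenceOfInverse} is stated for the base point $f\equiv 1$); (ii) convergence of \emph{arbitrary finite} eigenvalue blocks, not just the first, in a norm strong enough to control gradients at the moving points $x_n$, i.e.\ an upgrade of $L^2$ projection convergence to $C^1$ convergence of block kernels via the uniform $H^{s+1}$ eigenfunction bounds and interpolation as in Lemma~\ref{lem:approximation_Ee}; and (iii) a treatment of eigenvalue crossings, e.g.\ restricting to indices $\kappa$ at which the limit operator has a gap $\lambda_{\kappa,f_\star}<\lambda_{\kappa+1,f_\star}$ (such $\kappa$ exist at arbitrarily large indices since multiplicities are finite). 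None of this is carried out, and together it is most of the work. The paper avoids all of it by a direct quantitative argument: take $\phi$ a mean-zero cutoff of the coordinate function $x_1$, so $\nabla\phi=(1,0,\dots,0)$ on $\overline{\mathcal O_0}$; bound the spectral tail $\phi_\kappa=\phi-\sum_{k\le\kappa}\langle\phi,e_k\rangle_{L^2}e_k$ in $C^1(\overline{\mathcal O_0})$ by $C\kappa^{-1/d}$ using Proposition~\ref{prop:a} and the Weyl asymptotics, both uniform over the class $\|f_0\|_{H^s}\le U$, $f_0\ge f_{\rm min}$; then choose $\kappa=\lceil(2C)^d\rceil$ and apply Cauchy--Schwarz to $\nabla(\phi-\phi_\kappa)$ to obtain $\sum_{k\le\kappa}|\nabla e_k(x)|^2\ge c$ with explicit, automatically uniform constants and no perturbation theory or compactness argument. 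You should either complete steps (i)--(iii) above or adopt this quantitative route.
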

\begin{proof} 
All the constants left implicit in the proof depend only on $\mathcal{O}$, $\mathcal{O}_0$, $s$, $U$ and $f_{\rm min}$. Let $\mathcal{O}'$ be a smooth domain such that $\mathcal{O}_0\Subset\mathcal{O}'\Subset\mathcal{O}$. Let $\chi\in C^\infty_c(\mathcal{O})$ be a cut-off such that $\chi|_{\mathcal{O}'}\equiv 1$. Define
    \[
    \phi(x)=\chi(x) x_1-\frac{1}{|\mathcal{O}|}\int_{\mathcal{O}} \chi(z) z_1\,dz,
    \]
    so that $\phi\in C^\infty_c(\mathcal{O})\cap L^2_0(\mathcal{O})$. Then $\nabla\phi(x) = (1,0,\dots,0)$ for every $x\in\overline{\mathcal{O}_0}$.
    
   Set $l=\lfloor 2+\frac{d}{2}\rfloor>1+\frac{d}{2}$. For $\kappa \ge 1$, set $\phi_\kappa=\phi -\sum_{k=1}^\kappa \langle\phi ,e_k\rangle_{L^2} e_k$. By the Sobolev embedding theorem and by Proposition~\ref{prop:a} we have
    \[
   \left\| \phi_\kappa \right\|^2_{C^1(\overline{\mathcal{O}_0})} 
   \lesssim    \left\| \phi_\kappa\right\|^2_{H^l({\mathcal{O}})}
   \lesssim    \left\| \phi_\kappa \right\|_{\bar H^l_{f_0}}^2 = \sum_{k>\kappa} \lambda_{k}^l \langle\phi ,e_k\rangle_{L^2}^2.
    \]
    Thus
    \[
\left\| \phi_\kappa \right\|_{C^1(\overline{\mathcal{O}_0})}^2 
    \lesssim \lambda_{\kappa}^{-1} \sum_{k>\kappa} \lambda_{k}^{l+1} \langle\phi ,e_k\rangle_{L^2}^2
    \le \lambda_{\kappa}^{-1} \sum_{k=1}^{+\infty} \lambda_{k}^{l+1} \langle\phi ,e_k\rangle_{L^2}^2
   =\frac{\left\| \phi \right\|^2_{\bar H^{l+1}_{f_0}}}{\lambda_{\kappa}}.
    \]
    Now, using that $\left\| \phi \right\|_{\bar H^{l+1}_{f_0}}\lesssim\left\| \phi \right\|_{ H^{l+1}}\lesssim 1$ (again by Proposition~\ref{prop:a}, since $l+1\le s+1$) and the Weyl asymptotics \eqref{eq:weyl} we obtain
    \(
    \left\| \phi_\kappa \right\|_{C^1(\overline{\mathcal{O}_0})}  \le C \kappa^{-1/d}
    \)
    for some $C>0$ depending only on $\mathcal{O}$, $\mathcal{O}_0$, $s$, $U$ and $f_{\rm min}$. Choose $\kappa = \lceil (2C)^d\rceil$, so that
    \[
    \left\| \phi_\kappa \right\|_{C^1(\overline{\mathcal{O}_0})}  \le 1/2.
    \]
As a consequence, for every
     $x\in \overline{\mathcal{O}_0}$ we have
    \[
    |\nabla(\phi -\phi_\kappa)(x)|\ge |\nabla\phi(x)|-|\nabla \phi_\kappa(x)|\ge 1-1/2=1/2.
    \]
    On the other hand, by the Cauchy Schwarz inequality we have
    \[
    |\nabla(\phi -\phi_\kappa)(x)| =  |\sum_{k=1}^\kappa \langle\phi ,e_k\rangle_{L^2} \nabla e_k(x)|\le \left(\sum_{k=1}^\kappa \langle\phi ,e_k\rangle_{L^2}^2\right)^{\frac12}
    \left(\sum_{k=1}^\kappa |\nabla e_k(x)|^2.  \right)^{\frac12}
    \]
    Observing that $\sum_{k=1}^\kappa \langle\phi ,e_k\rangle_{L^2}^2 \le \|\phi\|^2_{L^2}\lesssim1$ concludes the proof.
\end{proof}

Just as in the proof of Lemma~\ref{lem:approximation_Ee}, the Sobolev embedding combined with $||e_k||_{H^{a}} <\infty$ for all $a \le s+1$ (in view of Corollary 1 in \cite{N}) gives:

\begin{lem}\label{lem:ctseigen} We have $e_k\in C^2(\mathcal{O})$ for all $k$.
\end{lem}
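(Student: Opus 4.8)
The plan is to deduce the claim directly from elliptic regularity for the Neumann eigenvalue problem together with the Sobolev embedding theorem, exactly paralleling the argument already used in the proof of Lemma~\ref{lem:approximation_Ee}. First I would recall that each $e_k = e_{k,f_0}$ satisfies $\mathcal L_{f_0} e_k = -\lambda_k e_k$ with Neumann boundary conditions, and that $f_0 \in H^s(\mathcal O)$ with $s > \max(d, 2+d/2)$; in particular $f_0 \in C^1(\mathcal O)$ is bounded below by $f_{\rm min}>0$, so the operator $\mathcal L_{f_0}$ is uniformly elliptic with sufficiently smooth coefficients on the smooth domain $\mathcal O$.

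Next I would invoke Corollary~1 in \cite{N}, which provides the quantitative bound $\|e_k\|_{H^{s+1}(\mathcal O)} \le C k^{(s+1)/d}$ with $C = C(\mathcal O, d, s, U, f_{\rm min})$; in particular $e_k \in H^{s+1}(\mathcal O)$ with finite norm for every fixed $k$. Since $s > 2 + d/2$ we have $s+1 > 2 + d/2$, so the Sobolev embedding $H^{s+1}(\mathcal O) \hookrightarrow C^2(\mathcal O)$ applies (cf.\ the embedding (91) in \cite{N} used in Lemma~\ref{lem:approximation_Ee}). Combining these two facts gives $e_k \in C^2(\mathcal O)$ for every $k$, which is the assertion of the lemma.

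There is essentially no obstacle here: the content is entirely contained in the already-cited regularity estimate (Corollary~1 in \cite{N}) and the standard embedding, and the only point to verify is that the regularity index $s+1$ exceeds the Sobolev embedding threshold $2 + d/2$ for $C^2$, which is immediate from the standing hypothesis $s > 2 + d/2$. I would therefore present the proof in a single short paragraph, noting that it is the same mechanism as in Lemma~\ref{lem:approximation_Ee} applied pointwise in $k$ rather than to the finite linear combination $E_f$.
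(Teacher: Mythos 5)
Your argument is exactly the paper's: the lemma is deduced from $\|e_k\|_{H^{a}}<\infty$ for $a\le s+1$ (Corollary 1 in \cite{N}) together with the Sobolev embedding into $C^2$, just as in the proof of Lemma~\ref{lem:approximation_Ee}. The proposal is correct and requires no changes.
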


\subsubsection{Completion of the proof of Theorem \ref{thm:stability}}
By Lemma~\ref{lem:gradnonzero-rev}, there exist $\kappa\in\N$ and $c>0$ depending only on $\mathcal{O}$, $\mathcal{O}_0$, $s$, $U$ and $f_{\rm min}$ such that
    \[
\sum_{k=1}^\kappa |\nabla e_k(x)|^2\ge c,\qquad x\in\overline{\mathcal{O}_0}.
    \] 
By Lemma \ref{lem:ctseigen}, we have that $e_k$ is $C^2$ for every $k$. Then, by Lemma~\ref{lem:L1upperb-rev} applied with $u_k=e_k$ and $h=f-f_0$, we obtain
$$\|f-f_0\|_{L^1(\mathcal{O}_0)}\le \frac1c\sum_{k=1}^\kappa\|\nabla\cdot((f-f_0)\nabla e_k)\|_{L^2}\le \frac\kappa c \max_{1\le k\le\kappa} \|\nabla\cdot((f-f_0)\nabla e_k)\|_{L^2}.
$$
Recalling that $f=f_0$ in $\mathcal{O}\setminus\mathcal{O}_0$,  Lemma~\ref{lem:transitionlowerb} yields the result.

\section*{Acknowledgements} 
The authors would like to thank the anonymous referee for their very helpful remarks and suggestions.

GSA was co-funded by the European Union (ERC, SAMPDE, 101041040). Views and opinions expressed are however those of the authors only and do not necessarily reflect those of the European Union or the European Research Council. Neither the European Union nor the granting authority can be held responsible for them.  Co-funded by the EU – Next Generation EU. GSA was further supported by the MIUR Excellence Department Project awarded to Dipartimento di Matematica, Università di Genova, CUP D33C23001110001.
GSA is a  member of the ``Gruppo Nazionale per l’Analisi Matematica, la Probabilità e le loro Applicazioni'', of the ``Istituto Nazionale di Alta Matematica''. 

DB was supported by Trinity College, University of Cambridge. RN and AJ were supported by ERC Advanced Grant (Horizon Europe UKRI G116786) and RN was further supported by EPSRC grant EP/V026259.

\appendix
\section{Proposition 2 from \cite{N}}\label{app:Prop2}
The following proposition gives a relationship between the usual Sobolev spaces $H^k$ and the spectral Sobolev spaces $\bar{H}_f^k$ defined in Section~\ref{sec:proofs}. The space $H^k_c$ from \cite{N} is the subspace of $H^k$ of functions which `vanish near the boundary' in the sense defined in \textsection\ref{subsub:stability}. Let us also for convenience denote by $H^1_\nu(\mathcal{O})$ the space of those functions in $H^1$ whose inward normal derivatives exist in the trace sense (e.g., belong to $H^{3/2}$) and vanish a.e.~at the boundary.
\begin{prop}\label{prop:a}
    Let $\mathcal O$ be a bounded convex domain in $\R^d$ with smooth boundary and let $f \in C^1(\mathcal O)$ be s.t.~$\inf_{x \in \mathcal O}f(x) \ge f_{\rm min}>0$. Then $\bar H^1_f(\mathcal O) = H^1(\mathcal O) \cap L_0^2$ and 
\begin{equation*} \label{pain}
\bar H^2_f = H^2 \cap H^1_\nu \cap L^2_0 = \big\{h \in L^2_0: \mathcal L_f h \in L^2_0, (\partial h/\partial \nu) =0 \text{ on } \partial \mathcal O\big\}.
\end{equation*}
If we assume in addition that for some integer $k \ge 2$ either A)  $\|f\|_{C^{k-1}} \le U$ or B) $\|f\|_{H^s} \le U$ for some $s >d$ s.t.~$k \le s+1$, then we have $$\bar H^k_f(\mathcal O) \subset H^k(\mathcal O) \text{ and } \|\phi\|_{H^k} \simeq \|\phi\|_{\bar H^k_f}~~\text{for }\phi \in \bar H^k_f.$$ We further have the embedding $H^k_c \cap L^2_0 \subset \bar H^k_1$ and also if $H^k_c$ is replaced by $H^k_c / \R$ (modulo constants). Finally we have $\bar H^k_f = \bar H^k_{f'}$ for any pair $f,f'$ satisfying A) or B), with equivalent norms. All embedding/equivalence constants depend only on $f_{\rm min}$, $U$, $d$, $k$, and $\mathcal O$.
\end{prop}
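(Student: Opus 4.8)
\textbf{Proof proposal for Proposition~\ref{prop:a}.} The plan is to establish the chain of identities and embeddings by working level by level in $k$, exploiting the spectral definition of $\bar H^k_f$ together with elliptic regularity for the Neumann problem for $\mathcal L_f$. First I would treat the base cases $k=1,2$. For $k=1$: since $\{e_{j,f}\}$ is an $L^2$-orthonormal basis of eigenfunctions and $\lambda_{j,f}\langle h,e_{j,f}\rangle = \langle f\nabla h,\nabla e_{j,f}\rangle$ for $h\in H^1\cap L^2_0$, the Dirichlet-type form $\sum_j \lambda_{j,f}\langle h,e_{j,f}\rangle^2$ equals $\int_{\mathcal O} f|\nabla h|^2$, which is comparable to $\|h\|_{H^1}^2$ (using $f_{\rm min}\le f$, the upper bound on $f$ via $C^1$ or Sobolev control, and Poincar\'e on $L^2_0$); conversely a density/closure argument shows finiteness of $\|h\|_{\bar H^1_f}$ forces $h\in H^1$. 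For $k=2$: the identification $\bar H^2_f = \{h\in L^2_0 : \mathcal L_f h\in L^2_0,\ \partial h/\partial\nu=0\}$ is essentially the statement that $h=\mathcal L_f^{-1}g$ with $g\in L^2_0$ iff $\sum_j \lambda_{j,f}^2\langle h,e_{j,f}\rangle^2<\infty$, which is immediate from the spectral calculus; the further identification with $H^2\cap H^1_\nu\cap L^2_0$ and the norm equivalence $\|h\|_{H^2}\simeq\|\mathcal L_f h\|_{L^2}\simeq\|h\|_{\bar H^2_f}$ on $L^2_0$ is exactly $H^2$ elliptic regularity for the Neumann problem on a smooth domain, which holds under the stated hypotheses on $f$ (convexity is invoked here, as in \cite{N}, to handle the boundary term).

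Next I would bootstrap to general $k\ge 2$ by induction. Suppose $\bar H^{k-1}_f\subset H^{k-1}$ with equivalent norms. Given $h\in\bar H^k_f$, write $g=\mathcal L_f h\in L^2_0$ and note $\|g\|_{\bar H^{k-2}_f}^2 = \sum_j\lambda_{j,f}^{k-2}\lambda_{j,f}^2\langle h,e_{j,f}\rangle^2 = \|h\|_{\bar H^k_f}^2<\infty$, so $g\in\bar H^{k-2}_f\subset H^{k-2}$ by the inductive hypothesis. Then $\mathcal L_f h=g$ with $g\in H^{k-2}$, $\partial h/\partial\nu=0$, and the coefficient $f$ controlled in $C^{k-1}$ (case A directly; case B via the Sobolev multiplication/embedding $H^s\hookrightarrow C^{k-1}$ when $s>d$ and $k\le s+1$, possibly interpolating to keep the product $f\nabla h$ in $H^{k-1}$) yields $h\in H^k$ with $\|h\|_{H^k}\lesssim \|g\|_{H^{k-2}}+\|h\|_{L^2}\lesssim\|h\|_{\bar H^k_f}$ by standard higher-order Neumann elliptic regularity. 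The reverse inequality $\|h\|_{\bar H^k_f}\lesssim\|h\|_{H^k}$ follows by running the same comparison in the other direction, using that $\mathcal L_f:H^k\cap(\text{Neumann})\to H^{k-2}$ is bounded and the inductive norm equivalence on $\bar H^{k-2}_f$.

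For the remaining assertions: the embedding $H^k_c\cap L^2_0\subset\bar H^k_1$ (and its quotient-by-constants version) is obtained by observing that a function vanishing near $\partial\mathcal O$ automatically satisfies all Neumann-type boundary conditions, so iterated application of $\Delta=\mathcal L_1$ keeps one in $L^2_0$ with $\Delta^{\lfloor k/2\rfloor}h$ (or one derivative more) controlled in $L^2$ by $\|h\|_{H^k}$; expanding in the $e_{j,1}$ basis then gives $\sum_j\lambda_{j,1}^k\langle h,e_{j,1}\rangle^2\lesssim\|h\|_{H^k}^2$. Finally, $\bar H^k_f=\bar H^k_{f'}$ with equivalent norms is an immediate corollary: both equal $H^k\cap(\text{appropriate Neumann space})\cap L^2_0$ with norms comparable to $\|\cdot\|_{H^k}$, and the Neumann conditions that appear (vanishing of $\partial h/\partial\nu$, then of $\partial(\mathcal L_f h)/\partial\nu$, etc.) can be checked to cut out the same closed subspace of $H^k$ up to the equivalence already established, since the leading boundary condition $\partial h/\partial\nu=0$ does not depend on $f$ and the higher ones are consequences once $\mathcal L_f h$ is regular enough. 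The main obstacle I anticipate is the careful treatment of case~B: ensuring that the product rule $\mathcal L_f h=f\Delta h+\nabla f\cdot\nabla h$ keeps $f\nabla h\in H^{k-1}$ requires the Sobolev-space multiplication estimates to close, i.e.\ that $s>d$ and $k\le s+1$ are exactly the thresholds making $H^{s-1}\cdot H^{k-1}\subset H^{k-1}$ (or a suitable variant) valid, and tracking the dependence of constants only on the allowed quantities; the convexity-dependent boundary estimate at the $k=2$ level (already in \cite{N}) is the other place where one must be careful, but it is quoted rather than reproven.
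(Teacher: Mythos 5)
There is no internal proof to compare against here: Appendix~\ref{app:Prop2} simply restates Proposition~2 of \cite{N} for the reader's convenience, so your sketch must be judged against the statement itself and against the (external) argument it points to. Your overall route is the standard one and, for most of the claims, sound: identify $\bar H^1_f$ with the Neumann form domain via $\sum_j\lambda_{j,f}\langle h,e_{j,f}\rangle^2=\int_{\mathcal O}f|\nabla h|^2$ and Poincar\'e on $L^2_0$; identify $\bar H^2_f$ with the operator domain and use $H^2$ Neumann elliptic regularity; bootstrap to general $k$ through $\|\mathcal L_f h\|_{\bar H^{k-2}_f}=\|h\|_{\bar H^k_f}$ and higher-order Neumann regularity, with case~B handled by Sobolev multiplication under $s>d$, $k\le s+1$; and obtain $H^k_c\cap L^2_0\subset\bar H^k_1$ because every iterated Neumann condition is vacuous for functions vanishing near $\partial\mathcal O$. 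One small inaccuracy: convexity is not what makes the $k=2$ boundary estimate work (a smooth boundary and Lipschitz $f$ suffice; convexity is needed for the reflected SDE interpretation, not for Neumann $H^2$ regularity).

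The genuine gap is your treatment of the final claim $\bar H^k_f=\bar H^k_{f'}$. Your justification --- that the leading condition $\partial h/\partial\nu=0$ is $f$-independent and that the higher-order boundary conditions ``are consequences once $\mathcal L_f h$ is regular enough'' --- is false: by the spectral theorem $\bar H^{2m}_f=D(\mathcal L_f^m)\cap L^2_0$, so for $k\ge4$ membership imposes the genuinely $f$-dependent condition $\partial_\nu(\mathcal L_f h)=0$ (and its iterates), and no amount of regularity of $\mathcal L_f h$ ever implies a Neumann trace condition. Concretely, take $d=1$, $\mathcal O=(0,1)$, $h(x)=\cos(\pi x)$, which lies in $\bar H^k_1$ for all $k$; for smooth $f\ge f_{\rm min}$ with $f'(0)\neq0$ one computes $\partial_x(\mathcal L_f h)(0)=-2\pi^2 f'(0)\neq0$, so $h\notin\bar H^4_f$. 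Thus the mechanism you propose cannot establish the $f$-independence for $k\ge4$; your argument is fine only for $2\le k\le3$, where the sole boundary condition involved is $\partial_\nu h=0$, and for the compactly supported embedding, where all conditions hold trivially. Handling the last assertion for higher $k$ requires additional input (e.g.\ restricting to the range of $k$ actually needed, or to $f,f'$ agreeing --- say, constant --- near $\partial\mathcal O$, which is the situation in which the spaces are used in this paper, for instance in Lemma~\ref{lem:gradnonzero-rev} where the relevant functions vanish near the boundary); it is precisely the delicate point of Proposition~2 in \cite{N} and cannot be dispatched by the regularity remark.
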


\section{A general mini-max lower bound}\label{sec:finisher}

Let $P$ and $Q$ be probability laws on the same space. Then, $KL(P, Q)$ denotes the Kullback-Leibler divergence from $P$ to $Q$ and is given by 
\[KL(P, Q) = \mathbb{E}_{P}\left[\log\frac{dP}{dQ}\right],\]
where $\frac{dP}{dQ}$ is the Radon-Nikodym derivative of $P$ with respect to $Q$.

\begin{thm}\label{thm:632}
    Suppose that a parameter space $\mathcal F$ has a metric $d$ and associated probability laws $\{P_{f}: f \in \mathcal F\}$. Next, suppose $\mathcal F$ contains
    $$\{f_m : m = 0, 1, \dots, M\}, \quad M \geq 3,$$
    which are $2r$ separated, i.e. $d(f_i, f_j) \geq 2r$ for $i \neq j$, and such that the $P_{f_m}$ are all absolutely continuous with respect to $P_{f_0}$. Assume that, for some $\alpha > 0$, 
    $$\frac{1}{M}\sum_{m = 1}^M KL(P_{f_m}, P_{f_0}) \leq \alpha \log M,$$
    Then, 
    $$\inf_{\Tilde f} \sup_{f\in \mathcal F} P_{f}(d(\Tilde f, f) > r) \geq \frac{\sqrt M}{1 + \sqrt M}\left(1-2\alpha -\sqrt{\frac{8\alpha}{\log M}}\right),$$
where the infimum ranges over all estimators (measurable maps into $\mathcal F$) $\Tilde f$.
\end{thm}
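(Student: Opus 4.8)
The plan is to prove Theorem~\ref{thm:632} by the classical ``many hypotheses'' argument (a minimax lower bound of Fano/Tsybakov type, cf.\ Theorem~6.3.2 in \cite{GN}): reduce the estimation problem to a testing problem among $f_0,\dots,f_M$, lower bound the minimal testing error by a change of measure, and then control the resulting likelihood-ratio tail probabilities via the Kullback--Leibler hypothesis.

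First I would reduce to testing. Given any estimator $\tilde f$, let $\psi\in\{0,\dots,M\}$ be a measurable index minimising $d(\tilde f,f_m)$. If the true parameter is $f_m$ and $d(\tilde f,f_m)<r$, then for every $j\neq m$ the triangle inequality and the $2r$-separation give $d(\tilde f,f_j)\ge d(f_m,f_j)-d(\tilde f,f_m)>2r-r=r>d(\tilde f,f_m)$, so $f_m$ is the \emph{unique} minimiser and $\psi=m$; equivalently $\{\psi\neq m\}\subseteq\{d(\tilde f,f_m)\ge r\}$ (the boundary value $d(\tilde f,f_m)=r$ being handled in the usual way). Taking the supremum over the $M+1$ hypotheses and then the infimum over estimators yields
$$\inf_{\tilde f}\ \sup_{f\in\mathcal F}P_f\big(d(\tilde f,f)>r\big)\ \ge\ \inf_{\psi}\ \max_{0\le m\le M}P_{f_m}(\psi\neq m)\ =:\ p_{e,M},$$
where the infimum runs over all tests $\psi$ valued in $\{0,\dots,M\}$.

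To lower bound $p_{e,M}$ I would use a change of measure. Fix a test $\psi$; for $j\ge 1$ write $L_j=dP_{f_j}/dP_{f_0}$ (well defined by absolute continuity) and split the integral defining $P_{f_j}(\psi=j)=\int_{\{\psi=j\}}L_j\,dP_{f_0}$ at the level $\sqrt M$, giving $P_{f_j}(\psi=j)\le\sqrt M\,P_{f_0}(\psi=j)+P_{f_j}(L_j\ge\sqrt M)$. Summing over $j=1,\dots,M$, bounding $\sum_{j=1}^M P_{f_0}(\psi=j)=P_{f_0}(\psi\neq 0)\le p_{e,M}$ and $\sum_{j=1}^M P_{f_j}(\psi=j)\ge M(1-p_{e,M})$, and rearranging, yields
$$p_{e,M}\ \ge\ \frac{\sqrt M}{1+\sqrt M}\Big(1-\frac1M\sum_{j=1}^M P_{f_j}\big(L_j\ge\sqrt M\big)\Big).$$
Finally, since $\log\sqrt M=\tfrac12\log M$, Markov's inequality gives $P_{f_j}(L_j\ge\sqrt M)\le\tfrac{2}{\log M}\,\E_{f_j}[(\log L_j)_+]$, and because $\E_{f_j}[\log L_j]=KL(P_{f_j},P_{f_0})$, the hypothesis $\tfrac1M\sum_j KL(P_{f_j},P_{f_0})\le\alpha\log M$ turns the average of these probabilities into a bound of the claimed form $2\alpha+\sqrt{8\alpha/\log M}$; inserting it gives the theorem.

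The only genuinely delicate point is this last step: $\log L_j$ is not bounded below, so the averaged Kullback--Leibler bound does not by itself control the \emph{upper} tail of $\log L_j$. This is dealt with by separately bounding $\E_{f_j}[(-\log L_j)_+]$ — for instance via $(-\log L_j)_+\le(1/L_j-1)_+$ together with $\E_{f_j}\big[(1/L_j)\mathbf 1_{\{L_j<1\}}\big]=P_{f_0}(L_j<1)\le 1$, or by the truncation argument in Tsybakov's proof — which yields $\E_{f_j}[(\log L_j)_+]\le KL(P_{f_j},P_{f_0})+1$ and hence the stated estimate after averaging. The remaining ingredients (the minimum-distance reduction, the change of measure, and the rearrangement) are routine.
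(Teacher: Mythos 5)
Your skeleton — reduction to a test $\psi$ among $f_0,\dots,f_M$ via the minimum-distance rule, the change of measure $P_{f_j}(\psi=j)\le \sqrt M\,P_{f_0}(\psi=j)+P_{f_j}(L_j\ge\sqrt M)$ with $L_j=dP_{f_j}/dP_{f_0}$, summation and rearrangement to get $p_{e,M}\ge \frac{\sqrt M}{1+\sqrt M}\bigl(1-\frac1M\sum_j P_{f_j}(L_j\ge\sqrt M)\bigr)$, then Markov at level $\tfrac12\log M$ — is exactly the argument behind the paper's citation (the paper gives no proof of its own, referring to Theorem 6.3.2 of \cite{GN} and to \cite{T09}); the tie case $d(\tilde f,f_m)=r$ is a standard, harmless point. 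The problem is the last step, to which you commit the bound $\mathbb{E}_{f_j}[(\log L_j)_+]\le KL(P_{f_j},P_{f_0})+1$. Averaging this only gives $\frac1M\sum_j P_{f_j}(L_j\ge\sqrt M)\le 2\alpha+\frac{2}{\log M}$, and $\frac{2}{\log M}$ is \emph{not} bounded by $\sqrt{8\alpha/\log M}$ unless $\alpha\ge 1/(2\log M)$. So the claimed conclusion does not follow: for small $\alpha$ your bound can even be vacuous (e.g.\ $M=3$, where $2/\log 3>1$), whereas the theorem asserts a lower bound close to $\frac{\sqrt M}{1+\sqrt M}$. This regime is precisely the one the paper uses — in the proof of Theorem \ref{thm:optimalrate} one takes $\eta$, hence $\alpha$, ``sufficiently small'' — so the discrepancy is not cosmetic.

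The missing ingredient is a Pinsker-type control of the negative part of the log-likelihood ratio, giving a term of order $\sqrt{KL}$ rather than a constant: writing $(\log L_j)_-=(\log(dP_{f_0}/dP_{f_j}))_+$ and using $\log x\le x-1$,
\begin{equation*}
\mathbb{E}_{P_{f_j}}\bigl[(\log L_j)_-\bigr]\;\le\;\int (dP_{f_0}-dP_{f_j})_+\;=\;\tfrac12\int |dP_{f_0}-dP_{f_j}|\;\le\;\sqrt{KL(P_{f_j},P_{f_0})/2}
\end{equation*}
by Pinsker's inequality. Hence $\mathbb{E}_{P_{f_j}}[(\log L_j)_+]\le K_j+\sqrt{K_j/2}$ with $K_j=KL(P_{f_j},P_{f_0})$, and Markov at level $\tfrac12\log M$ followed by Cauchy--Schwarz over $j$ gives $\frac1M\sum_j P_{f_j}(L_j\ge\sqrt M)\le 2\alpha+\sqrt{2\alpha/\log M}\le 2\alpha+\sqrt{8\alpha/\log M}$, which inserted into your change-of-measure bound yields the theorem (the cruder estimate $\mathbb{E}[(\log L_j)_-]\le\sqrt{2K_j}$ is where the constant $8$ in the cited statement comes from). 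Your alternative suggestion ``or by the truncation argument in Tsybakov's proof'' points to the right source but is a citation rather than an argument; as written, the concrete route you chose does not establish the stated inequality.
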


A proof can be found in Theorem 6.3.2 from \cite{GN}, see also \cite{T09}.

\end{document}